\newcommand{\R}{\mathbb R}
\newtheorem{theorem}{Theorem}[section]
\newtheorem{proposition}{Proposition}[section]
\newtheorem{definition}{Definition}[section]
\newtheorem{remark}{Remark}[section]
\newtheorem{example}{Example}[section]
\newcommand{\supp}{\mathop{\rm supp}}
\renewcommand{\epsilon}{\varepsilon}
\newcommand{\eps}{\epsilon}
\renewcommand{\phi}{\varphi}
\newenvironment{proofof}[1]{\smallskip\noindent\emph{Proof of #1.}%
\hspace{1pt}}{\hspace{-5pt}{\nobreak\quad\nobreak\hfill\nobreak%
$\square$\vspace{8pt}\par}\smallskip\goodbreak}
\newlength{\captionwidth}
\long\def\@makecaption#1#2{%
   \vskip 10\p@
   \setbox\@tempboxa\hbox{#1: #2}%
   \ifdim \wd\@tempboxa > \captionwidth 
       \hbox to\hsize{\hfil
       \parbox[t]{\captionwidth}{
       \small#1: \small#2\par}
       \hfil}
     \else
       \hbox to\hsize{\hfil\box\@tempboxa\hfil}%
   \fi}
\tikzset{
  on each segment/.style={
    decorate,
    decoration={
      show path construction,
      moveto code={},
      lineto code={
        \path [#1]
        (\tikzinputsegmentfirst) -- (\tikzinputsegmentlast);
      },
      curveto code={
        \path [#1] (\tikzinputsegmentfirst)
        .. controls
        (\tikzinputsegmentsupporta) and (\tikzinputsegmentsupportb)
        ..
        (\tikzinputsegmentlast);
      },
      closepath code={
        \path [#1]
        (\tikzinputsegmentfirst) -- (\tikzinputsegmentlast);
      },
    },
  },
  mid arrow/.style={postaction={decorate,decoration={
        markings,
        mark=at position .6 with {\arrow[#1]{stealth}}
      }}},
}
\begin{document}

\title{Wavefronts for degenerate diffusion-convection reaction equations
with sign-changing diffusivity}

\author{
Diego Berti\footnote{Department of Sciences and Methods for Engineering, University of Modena and Reggio Emilia, Italy}
\and
Andrea Corli\footnote{Department of Mathematics and Computer Science, University of Ferrara, Italy}
\and
Luisa Malaguti\footnotemark[1]
}



\maketitle

\begin{abstract}
We consider in this paper a diffusion-convection reaction equation in one space dimension. The main assumptions are about the reaction term, which is monostable, and the diffusivity, which changes sign once or even more than once; then, we deal with a forward-backward parabolic equation. Our main results concern the existence of globally defined traveling waves, which connect two equilibria and cross both regions where the diffusivity is positive and regions where it is negative. We also investigate the monotony of the profiles and show the appearance of sharp behaviors at the points where the diffusivity degenerates. In particular, if such points are interior points, then the sharp behaviors are new and unusual.

\vspace{1cm}
\noindent \textbf{AMS Subject Classification:} 35K65; 35C07, 34B40, 35K57

\smallskip
\noindent
\textbf{Keywords:} Sign-changing diffusivity, traveling-wave solutions, diffusion-convection reaction equations, sharp profiles.
\end{abstract}

\section{Introduction}\label{s:I}

This paper deals with traveling-wave solutions to degenerate parabolic equations of forward-backward type. More precisely, we consider the equation
\begin{equation}\label{e:E}
\rho_t + f(\rho)_x=\left(D(\rho)\rho_x\right)_x + g(\rho), \qquad t\ge 0, \, x\in \R.
\end{equation}
We denote with $\rho=\rho(t,x)$ the unknown function; also in view of applications we understand $\rho$ as a (normalized) density or a concentration and then assume that $\rho$ is valued in the interval $[0,1]$. On the convective term $f$ we only assume
\begin{itemize}
\item[{(f)}]\, $f\in C^1[0,1]$, $f(0)=0$.
\end{itemize}
The condition $f(0)=0$ just fixes a flux representative, since convection is only defined up to an additive constant. In the following, for brevity, we denote the derivative of $f$ as $h(\rho)=\dot f(\rho)$. 
We assume that $g$ satisfies
\begin{itemize}
\item[{(g)}]\, $g\in C^0[0,1]$, $g>0$ in $(0,1)$, $g(0)=g(1)=0$.
\end{itemize}
Condition (g) is natural in this framework and then $g$ is said of {\em monostable} type. The diffusivity $D$ is required to satisfy one of the following assumptions, for some $\alpha, \beta \in (0,1)$:
\begin{itemize}
\item[{$(\rm D_{pn})$}] \, $D\in C^1[0,1]$, $D>0$ in $(0,\alpha)$ and $D<0$ in $(\alpha,1)$;

\item[{$(\rm D_{np})$}] \, $D\in C^1[0,1]$, $D<0$ in $(0,\beta)$ and $D>0$ in $(\beta,1)$.

%
\end{itemize}
Each condition is labelled following the sign of $D$: \lq\lq{}pn\rq\rq{} in {{$\rm (D_{pn})$} means that $D$ is first positive and then negative, and the other so on. Above, $\alpha$ denotes a zero of $D$ such that $\dot{D}(\alpha)\le 0$ while $\beta$ denotes a zero of $D$ such that $\dot{D}(\beta)\ge 0$.

\begin{figure}[htb]
\begin{center}

\begin{tikzpicture}[>=stealth, scale=0.5]
\draw[->] (0,0) node[below]{\footnotesize{$0$}} --  (6,0) node[below]{$\rho$} coordinate (x axis);
\draw[->] (0,0) -- (0,3) node[right]{$f$} coordinate (y axis);
\draw[thick] (0,0) .. controls (1.5,4) and (3.5,4) .. (5,1) ;
\draw[dotted] (5,0) -- (5,1);
\draw(5,0)  node[below]{\footnotesize{$1$}};

\begin{scope}[xshift=8cm]
\draw[->] (0,0) --  (6,0) node[below]{$\rho$} coordinate (x axis);
\draw[->] (0,0) -- (0,3) node[right]{$g$} coordinate (y axis);
\draw[thick] (0,0) .. controls (1.6,4) and (4.2,3.4) .. (5,0)  node[below]{\footnotesize{$1$}};
\end{scope}

\begin{scope}[xshift=16cm]
\draw[->] (0,0) --  (6,0) node[below]{$\rho$} coordinate (x axis);
\draw[->] (0,0) -- (0,3) node[right]{$D$} coordinate (y axis);
\draw (0,0) -- (0,-1.5);
\draw[thick] (0,1) .. controls (1,3) and (2,3) .. (3.2,0) node[right=5, above=0]{\footnotesize{$\alpha$}};
\draw[thick] (3.2,0) .. controls (4,-2) and (4.5,-2) .. (5,-1);
\draw[dotted] (5,-1) -- (5,0) node[above]{\footnotesize{$1$}};
\draw(3,-3) node[above]{$(\rm D_{pn})$};
\end{scope}

\begin{scope}[xshift=24cm]
\draw[->] (0,0) --  (6,0) node[below]{$\rho$} coordinate (x axis);
\draw[->] (0,0) -- (0,3) node[right]{$D$} coordinate (y axis);
\draw (0,0) -- (0,-1.5);
\draw[thick] (0,-1) .. controls (1,-3) and (2,-3) .. (3.2,0) node[right=5, below=0]{\footnotesize{$\beta$}};
\draw[thick] (3.2,0) .. controls (4,2) and (4.5,2) .. (5,1);
\draw[dotted] (5,1) -- (5,0) node[below]{\footnotesize{$1$}};
\draw(3,-3) node[above]{$(\rm D_{np})$};
\end{scope}

\end{tikzpicture}

\end{center}
\caption{\label{f:f}{Typical plots of the functions $f$, $g$ and $D$.}}
\end{figure}
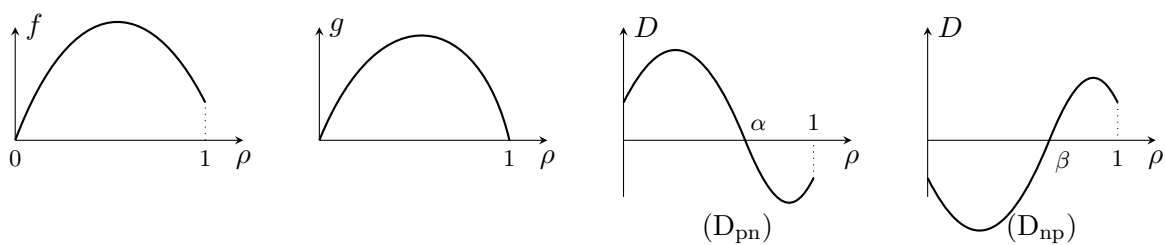

The above assumptions on $D$ are the main issue of this paper, and make \eqref{e:E} a {\em forward} parabolic equation where $D>0$ and a {\em backward} parabolic equation where $D<0$.
The above conditions leave also open the possibility that $D$ vanishes at $0$ or $1$.
%
%
%
%
%

There are several motivations to study forward-backward parabolic equations as \eqref{e:E}: for a short list of different modeling we quote \cite{HPO, Maini-Malaguti-Marcelli-Matucci2006, Padron} for biology, \cite{DJLW} for geophysics, \cite{Kerner-Osipov} for thermodynamics. However, our main source of inspiration has been the recent modeling of collective movements, namely of vehicular flows and crowds dynamics.  About this topic, we refer to \cite{Garavello-Han-Piccoli_book, Garavello-Piccoli_book, Rosinibook} for general information, to \cite{Bellomo-Delitala-Coscia, Bellomo-Dogbe, BTTV} for the modeling using degenerate parabolic equations, to \cite{CdRMR} for the study of star graphs, while for sign-changing diffusivities we refer to \cite{CM-ZAMP, CM-Indam, Nelson_2000} and the references in the two former papers.

Our interest in this paper is about {\em traveling-wave} solutions (TWs for short) to \eqref{e:E}; they are solutions to \eqref{e:E} of the form $\rho(t,x)= \phi(x-ct)$. Here, the function $\phi=\phi(\xi)$ is the {\em profile} of the TW and the real number $c$ is its {\em speed}. The equation for the profiles is then
\begin{equation}\label{e:ODE}
\left(D(\phi)\phi^{\prime}\right)^{\prime}+\left(c -h(\phi)\right)\phi' + g(\phi)=0,
\end{equation}
where ${'}$ denotes the derivative with respect to $\xi$. More precisely we focus on {\em wavefronts}, i.e., globally-defined, nonconstant and monotone profiles. To fix ideas we deal with {\em non-increasing} profiles and this leads us, because of (g), to impose the conditions
\begin{equation}\label{e:infty}
\phi(-\infty)=1, \qquad \phi(+\infty)=0.
\end{equation}
The study of non-decreasing profiles, in which the conditions in \eqref{e:infty} are switched, is not explicitly treated in this paper. Nevertheless, all the results can be rephrased in that case, once that (roughly speaking) the direction of speeds is reversed. Clearly, even under \eqref{e:infty} the solutions to \eqref{e:ODE} are at most unique up to horizontal shifts. An interesting issue is whether the equilibria $0$ and $1$ can be reached by a wavefront $\phi$ at a finite value $\xi_0$. This possibly occurs if $D$ vanishes at those points, and in such cases $\phi$ is necessarily constant on either $(\xi_0,\infty)$ or $(-\infty,\xi_0)$, with values $0$ and $1$, respectively. The profile is called {\em sharp} if it is not differentiable at $\xi_0$. In this case $\phi$ is not a classical solution of \eqref{e:ODE} (see Definition \ref{d:tws}). We refer to \cite{GK} for more information on traveling waves. The case where $D$ changes sign has been considered by several authors {\em but only when $f=0$}. About this case, we quote \cite{Bao-Zhou2014, Maini-Malaguti-Marcelli-Matucci2006} for $D$ satisfying {$(\rm D_{np})$} and {$(\rm D_{pn})$}, respectively, and monostable $g$; \cite{Maini-Malaguti-Marcelli-Matucci2007} for the case {$(\rm D_{pn})$} and bistable $g$ (i.e., $g$ changes sign once).

The main result of the current paper is that there still exist wavefronts joining $1$ with $0$, which travel across regions where $D$ changes sign. In our approach the profiles are constructed by suitably pasting  two {\em semi-wavefronts} and possibly a \emph{traveling wave solution} in a bounded interval (see the next section for a definition), as in \cite{CM-ZAMP}. As a consequence of this procedure, one realizes that the assumptions on $D$ can be somewhat relaxed, as in \cite{Bao-Zhou2014}. Indeed, for instance in case $(\rm D_{pn})$, it is sufficient to require $D\in C^0[0,1]$ and $D\in C^1[0,\alpha]\cap C^1[\alpha,1]$: the derivatives at $\alpha^-$ and $\alpha^+$ may be different.

We prove that the wavefronts constructed in this way are unique (up to shifts) and provide results about the strict monotony of profiles; in particular, we characterize when they are sharp. At last, we give rather precise bounds on the critical thresholds by exploiting the estimates obtained in \cite{BCM1}; there, in turn, we used some related recent results proved in \cite{Marcelli-Papalini}.
The main tool underlying our approach is a well-known reduction (in regions where $D$ has constant sign) of Equation \eqref{e:ODE} to singular first-order systems \cite{MMconv} and its study by comparison-type techniques. More precisely, if we denote $z(\phi):=D(\phi)\phi'$, where $\phi'$ is computed at $\phi^{-1}(\phi)$ (notice that the inverse function of $\phi$ exists because of the monotony of $\phi$), then we are reduced to consider the problems, for instance in the case $(\rm D_{pn})$,
\begin{equation}
\label{e:zIntro}
\begin{cases}
\dot{z}(\varphi)=h(\varphi)-c-\frac{\left(D g \right)(\varphi)}{z(\varphi)}, \ &\varphi\in (0,\alpha),\\
z(\varphi) < 0 , \ &\varphi \in (0,\alpha),\\
z(0)=0,
\end{cases}
\hbox{ or }
\begin{cases}
\dot{z}(\varphi)=h(\varphi)-c-\frac{\left(D g \right)(\varphi)}{z(\varphi)}, \ &\varphi\in (\alpha,1),\\
z(\varphi) > 0 , \ &\varphi \in (\alpha,1),\\
z(1)=0,
\end{cases}
\end{equation}
and similar problems in the other cases. We refer to \cite{BCM1} for a detailed study of $\eqref{e:zIntro}_1$.

Problems \eqref{e:zIntro} seems to suggest that the roles played by $D$ and $g$ are interchangeable; this is not true, in general. Consider for instance the bistable (or Allen-Cahn) equation, where $D>0$ in $(0,1)$ but $g$ satisfies $g(0)=g(\alpha)=g(1)=0$, $g<0$ in $(0,\alpha)$ and $g>0$ in $(\alpha,1)$. In this case it is known \cite{Aronson-Weinberger}, at least if $f=0$, that equation \eqref{e:E} admits a {\em unique} admissible speed $c$ corresponding to a wavefront from $1$ to $0$. In the current case $(\rm D_{np})$, on the contrary, where the role of $D$ and $g$ is commuted with respect to the bistable case, we shall find a whole half line of admissible speeds. The same result holds for the case $(\rm D_{pn})$.

Here follows an outline of the paper. Section \ref{sec:main} contains the statements of our main results. In Section \ref{sec:dpn}, we first briefly recall some preliminary results in order to keep the paper self-contained; then we prove the main result under $(\rm D_{pn})$. Section \ref{sec:dnp} deals with the case $(\rm D_{np})$. In Section \ref{sec:examples} we give some explicit examples; they aim at showing the role played by the convection term $f$, the qualitative difference of the thresholds in the cases $(\rm D_{pn})$ and $(\rm D_{np})$, the occurrence of non-regular profiles. At last, Section \ref{sec:two-sign} extends the previous results to the case when $D$ changes sign twice.

\section{Main results}\label{sec:main}
\setcounter{equation}{0}

As we mentioned in the Introduction, traveling-waves can fail to be of class $C^1$ in the whole of their domain. The following definition makes precise what we mean by a TWs, see \cite{GK}.

\begin{definition}\label{d:tws} Assume $f,D,g\in C^0[0,1]$ and let $I\subset\R$ be an open interval. Let $\varphi\in C^0(I)$ be a function valued in $[0,1]$, which is differentiable a.e. and such that $D(\varphi) \varphi^{\, \prime}\in L_{\rm loc}^1(I)$; at last, let $c$ be a real constant.

Then the function $\rho(x,t):=\varphi(x-ct)$, for $(x,t)$ with $x-ct \in I$, is a {\em traveling-wave} solution (briefly, a TW) to equation \eqref{e:E} with wave speed $c$ and wave profile $\phi$ if we have
\begin{equation}\label{e:def-tw}
\int_I \left(D\left(\phi(\xi)\right)\phi'(\xi) - f\left(\phi(\xi)\right) + c\phi(\xi) \right)\psi'(\xi) - g\left(\phi(\xi)\right)\psi(\xi)\,d\xi =0,
\end{equation}
for every $\psi\in C_0^\infty(I)$.
\end{definition}

We now give some definitions about TWs. A TW is: {\em global} if $I=\R$; {\em strict} if $I\ne \R$ and $\phi$ cannot be extended to $\R$; {\em classical} if $\varphi$ is differentiable, $D(\varphi) \varphi'$ is absolutely continuous and \eqref{e:ODE} holds a.e.; {\em sharp at $\ell$} if there exists $\xi_{\ell}\in I$, with $\phi(\xi_{\ell})=\ell$, such that $\phi$ is classical in $I\setminus\{\xi_\ell\}$ and not differentiable at $\xi_{\ell}$. Analogously, a TW is {\em classical} at $\ell$ if it is classical in a neighborhood of $\xi_\ell$.
\par
Moreover, a TW is: {\em a wavefront} if it is global, with a monotonic, non-constant profile $\phi$ which satisfies either \eqref{e:infty} or the converse condition; {\em a semi-wavefront to} $1$ (or {\em to} $0$) if $I=(a,\infty)$ for $a\in\R$, the profile $\phi$ is monotonic, non-constant and $\phi(\xi)\to1$ (respectively, $\phi(\xi)\to0$) as $\xi\to\infty$; {\em a semi-wavefront from} $1$ (or {\em from} $0$) if $I=(-\infty,b)$ for $b\in\R$, the profile $\phi$ is monotonic, non-constant and $\phi(\xi)\to1$ (respectively, $\phi(\xi)\to0$) as $\xi\to-\infty$. About semi-wavefronts, we say that $\phi$ connects $\phi(a^+)$ ($1$ or $0$) with $1$ or $0$ (resp., with $\phi(b^-)$).

The problem of the loss of regularity of $\phi$ depends on whether the parabolic equation degenerates or not; more precisely, by arguing on the very equation \eqref{e:ODE}, it is easy to see that if $f, D$ are of class $C^1$ and $g$ of class $C^0$, then $\phi$ is classical in every interval $I_{\pm}\subseteq I$ where $\pm D\left(\phi(\xi)\right) > 0$ for $\xi \in I_{\pm}$; moreover, $\phi\in C^2(I_\pm)$ (see e.g. \cite[Lemma 2.20]{GK}).

We recall that for a function $q:[0,1]\to \R$, the notation $D_{+}q(\rho_0)$ and $D_{-}q(\rho_0)$, with $\rho_0\in[0,1]$, stands for the {\em right}, resp., {\em left lower Dini-derivative} of $q$ at $\rho_0$; analogously, $D^{\pm}(q)$ represent the {\em right} and {\em left upper Dini-derivatives} of $q$.
More explicitly,
\[
D_{\pm}q(\rho_0):=\liminf_{\rho \to \rho_0^\pm}\frac{q(\rho)-q(\rho_0)}{\rho-\rho_0},
\quad
D^{\pm}q(\rho_0):=\limsup_{\rho \to \rho_0^\pm}\frac{q(\rho)-q(\rho_0)}{\rho-\rho_0}.
\]
In addition to the main assumptions (f), ${\rm (D_{pn})}$-${\rm (D_{np})}$ and (g) stated in the Introduction, we also need for some results two further regularity conditions on the product of $Dg$ at the boundary of the interval $[0,1]$, which are stated using the above notation:
\begin{equation}
\label{Dg}
D^+Dg(0) < +\infty \ \mbox{ and } \ D^{-}Dg(1)<+\infty.
\end{equation}
In general, $\eqref{Dg}_1$ implies that problem $\eqref{e:zIntro}_1$ has a super-solution for sufficiently large $c$, see \cite{Malaguti-Marcelli_2003} in the case $f=0$, and hence it is solvable for those $c$. Condition $\eqref{Dg}_1$ is always satisfied if $D(0)=0$, while, if $D(0)\ne0$, it requires that $g$ is sublinear close to $0$; condition $\eqref{Dg}_2$ is commented analogously. At last, we denote the {\em difference quotient} of a function $F=F(\phi)$ with respect to a point $\phi_0$ as
\begin{equation}\label{e:DQ}
\delta(F,\phi_0)(\phi) := \frac{F(\phi)-F(\phi_0)}{\phi-\phi_0}.
\end{equation}

\smallskip

We first focus on the case {$(\rm D_{pn})$}. The construction of a wavefront to \eqref{e:E} takes place by properly joining two semi-wavefronts, each of them with an intrinsic threshold. The existence of such semi-wavefronts in the region $(0,\alpha)$ has been done in \cite[Theorem 2.2]{BCM1}; the main content of that result is the following. Under (f), {$(\rm D_{pn})$}, (g) and $\eqref{Dg}_1$ there exists a real number, denoted by $c^*_{p,r}$, satisfying
\begin{equation}
\label{e:c*pr}
\max\left\{
\sup_{(0,\alpha]} \delta(f,0),
h(0) + 2\sqrt{D_{+}Dg(0)}
 \right\}
 \leq
 \\
 c^*_{p,r}
  \leq
   \sup_{(0,\alpha]} \delta(f,0) + 2 \sqrt{\sup_{(0,\alpha]}\delta(Dg,0)},
\end{equation}
such that Equation \eqref{e:E} admits strict semi-wavefronts to $0$, connecting $\alpha$ to $0$, with speed $c$ if and only if $c\ge c^*_{p,r}$. It is worth noting that the left- and the right-hand side of \eqref{e:c*pr} describe a non-empty interval (possibly degenerating to a single point) of real numbers, as a direct inspection trivially shows. Moreover, in \cite[Theorem 3.1]{Marcelli-Papalini} the authors proved that in case $Dg$ differentiable at $\phi=0$ (e.g. in case $D(0)=0$) the second addend of the right-hand side of \eqref{e:c*pr} can be further enhanced by replacing $\delta(Dg,0)(\phi)$ with its mean value in $(0,\phi)$, that is
\begin{equation}
\label{e:MPintro}
c^*_{p,r} \le \sup_{(0,\alpha]} \delta(f,0) +2\sqrt{\sup_{\phi\in(0,\alpha]}\frac1{\phi}\int_{0}^{\phi} \frac{D(s)g(s)}{s}\,ds}.
\end{equation}
We warn the reader that those semi-wavefronts are proved to be {\em intrinsically} nonunique, i.e., nonuniqueness holds even understanding profiles differing by a horizontal shift as a same profile (see Proposition \ref{p:swf to zero}).
The subscripts $p,r$ in $c^*_{p,r}$ mean that we are considering a case where $D$ is {\em positive} in an interval $I_+$ and vanishes in the {\em right} extremum of $I_+$,
while $g$ vanishes at the opposite extremum. Similarly, we shall use the notation $c^*_{n,l}$, $c^*_{p,l}$ and $c^*_{n,r}$.

An analogous result for semi-wavefronts from $1$, connecting $1$ to $\alpha$, is first deduced in this paper. Assume {\rm (f)}, {$(\rm D_{pn})$}, {\rm (g)} and $\eqref{Dg}_2$. There exists $c^*_{n,l}\in \R$ satisfying
\begin{equation}
\label{e:c*nl}
\max\left\{\sup_{[\alpha,1)}\delta(f,1),
h(1) + 2\sqrt{D_{-}Dg(1)} \right\}
\leq
c^*_{n,l}
\leq
\sup_{[\alpha,1)}\delta(f,1) + 2 \sqrt{\sup_{[\alpha,1)}\delta(Dg,1)},
\end{equation}
such that Equation \eqref{e:E} admits strict semi-wavefronts from $1$, connecting $1$ to $\alpha$, with speed $c$, if and only if $c\geq c^*_{n,l}$; see Proposition \ref{prop:swf from one}. Also in this case profiles are intrinsically nonunique.

We now present our main results on {\em wavefronts}. Define, see Figure \ref{f:c1},
\begin{equation}
\label{e:c*pn}
c^*_{pn} :=\max\left\{c^*_{p,r}, c^*_{n,l}\right\}.
\end{equation}

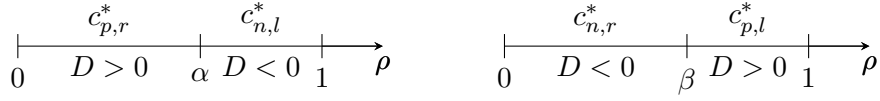
\begin{figure}[htb]
\begin{center}
\begin{tikzpicture}[>=stealth, scale=0.8]
\draw[->] (5,0) --  (6,0) node[below]{$\rho$} coordinate (x axis);
\draw (0,0) -- (3,0) node[above,midway]{$c_{p,r}^*$} node[below,midway]{$D>0$};
\draw (3,0) -- (5,0) node[above,midway]{$c_{n,l}^*$} node[below,midway]{$D<0$};
\draw[->] (5,0) --  (6,0) node[below]{$\rho$} coordinate (x axis);

\draw (0,-0.2) node[below]{$0$} -- (0,0.2);
\draw (5,-0.2) node[below]{$1$} -- (5,0.2);
\draw (3,-0.2) node[below]{$\alpha$}-- (3,0.2);

\begin{scope}[xshift=8cm]
\draw[->] (5,0) --  (6,0) node[below]{$\rho$} coordinate (x axis);
\draw (0,0) -- (3,0) node[above,midway]{$c_{n,r}^*$} node[below,midway]{$D<0$};
\draw (3,0) -- (5,0) node[above,midway]{$c_{p,l}^*$} node[below,midway]{$D>0$};
\draw[->] (5,0) --  (6,0) node[below]{$\rho$} coordinate (x axis);

\draw (0,-0.2) node[below]{$0$} -- (0,0.2);
\draw (5,-0.2) node[below]{$1$} -- (5,0.2);
\draw (3,-0.2) node[below]{$\beta$}-- (3,0.2);

\end{scope}
\end{tikzpicture}

\end{center}
\caption{\label{f:c1}{The thresholds $c^*_{p,r}$, $c^*_{n,l}$ used in \eqref{e:c*pn} and $c^*_{n,r}$, $c^*_{p,l}$, used below in \eqref{e:c*np}}.}
\end{figure}

For what concerns an estimate on $c^*_{pn}$, see Remark \ref{rem:c*pn}. We introduce the quantity $s_\pm (\gamma,c)$, defined formally by
\begin{equation}
\label{e:s}
s_\pm(\gamma,c):=\frac1{2}\left[ h(\gamma)-c \pm \sqrt{\left(h(\gamma)-c\right)^2-4 \dot{D}(\gamma)g(\gamma)} \right].
\end{equation}
In the next result, we make use of \eqref{e:s} with $\gamma=\alpha$. In this particular case, $s_-(\alpha, c)$ is well-defined since $\dot{D}(\alpha)g(\alpha)\le 0$.

\begin{theorem}
\label{thm:wf}
Assume {\rm (f)}, {\rm (g)}, {$\rm (D_{pn})$} and \eqref{Dg}. Equation \eqref{e:E} admits a (unique up to space shifts) wavefront, with speed $c$ and profile $\phi$ satisfying \eqref{e:infty}, if and only if $c\ge c^*_{pn}$.

For such $c$, we have $\phi\rq{}(\xi)<0$ when $\phi(\xi) \in (0,1)\setminus \{\alpha\}$; there exists a unique $\xi_\alpha \in \R$ such that $\phi(\xi_\alpha)=\alpha$ and
 \begin{equation}
 \label{e:phip(xialpha)}
\phi\rq{}(\xi_\alpha^\pm)=
\begin{cases}
\frac{g(\alpha)}{s_-(\alpha,c)}<0 \ &\mbox{ if } \ \dot{D}(\alpha)<0 \ \mbox{ or } \ c>h(\alpha),\\
-\infty \ &\mbox{ if } \ \dot{D}(\alpha)=0  \mbox{ and } c\leq h(\alpha).
\end{cases}
\end{equation}

At last, it holds that:

\begin{enumerate}[(i)]

\item if $D(0)D(1)<0$, then $\phi$ is strictly decreasing and hence classical in $\R\setminus\left\{\xi_\alpha\right\}$;

\item if $D(0)D(1)=0$ and $c>c^*_{pn}$, then $\phi$ is classical in $\R\setminus\{\xi_\alpha\}$; $\phi$ is strictly decreasing if
\[
c>\max\left\{h(0)+D^+g(0), \ h(1)+D^-\{-g\}(1)\right\};
\]
\item if $D(0)=0$ and $c=c_{pn}^*=c^*_{p,r}>h(0)$, then $\phi$ is sharp at $0$ (reached at some $\xi_0 > \xi_\alpha$) and if $D(1)=0$ and $c=c_{pn}^*=c^*_{n,l}>h(1)$, then $\phi$ is sharp at $1$ (reached at some $\xi_1 < \xi_\alpha$). In these cases
we have
\begin{eqnarray*}
 \phi\rq{}(\xi_0^-)=
\left\{
\begin{array}{ll}
\frac{h(0)-c^*_{p,r}}{\dot{D}(0)} <0 &\mbox{ if }\dot{D}(0)>0,\\[2mm]
-\infty \ &\mbox{ if } \dot{D}(0)=0,
\end{array}
\right. \  \phi\rq{}(\xi_1^+)=
\left\{
\begin{array}{ll}
\frac{h(1)-c^*_{n,l}}{\dot{D}(1)} <0 &\mbox{ if }\dot{D}(1)>0,\\[2mm]
-\infty &\mbox{ if }\dot{D}(1)=0.
\end{array}
\right.
\end{eqnarray*}
\end{enumerate}
\end{theorem}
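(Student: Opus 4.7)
My plan is to construct the wavefront by pasting two semi-wavefronts at the critical level $\alpha$: a semi-wavefront $\phi_-$ from $1$ down to $\alpha$, living in the backward-parabolic region where $D<0$, and a semi-wavefront $\phi_+$ from $\alpha$ down to $0$, in the forward-parabolic region where $D>0$. The first exists precisely when $c\ge c^*_{n,l}$ by Proposition~\ref{prop:swf from one} (yielding \eqref{e:c*nl}), and the second exists precisely when $c\ge c^*_{p,r}$ by \cite[Theorem~2.2]{BCM1} (yielding \eqref{e:c*pr}); hence both pieces are available exactly when $c\ge c^*_{pn}$. Necessity of $c\ge c^*_{pn}$ is immediate, since any global wavefront satisfying \eqref{e:infty} restricts on the sets $\{\phi>\alpha\}$ and $\{\phi<\alpha\}$ to semi-wavefronts of exactly the two types above.

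The central issue is that both semi-wavefronts reach $\alpha$ at a \emph{finite} value $\xi_\alpha$ of the profile variable, with matching one-sided derivatives. I would read this off from the singular first-order problems \eqref{e:zIntro} via $z(\phi)=D(\phi)\phi'$. Evaluating \eqref{e:ODE} at a candidate junction with $\phi(\xi_\alpha)=\alpha$ and $\phi'(\xi_\alpha^{\pm})=m$, and using $D(\alpha)=0$, yields the characteristic quadratic
\[
\dot{D}(\alpha)m^{2}+\bigl(c-h(\alpha)\bigr)m+g(\alpha)=0.
\]
Since $\dot{D}(\alpha)\le 0$ and $g(\alpha)>0$ this has nonnegative discriminant, and a short computation using the Vieta identity $s_+(\alpha,c)\,s_-(\alpha,c)=\dot{D}(\alpha)g(\alpha)$ shows that, whenever $\dot{D}(\alpha)<0$ or $c>h(\alpha)$, the unique negative root equals $g(\alpha)/s_-(\alpha,c)$; in the remaining case $\dot{D}(\alpha)=0$ and $c\le h(\alpha)$ the equation degenerates and forces $m=-\infty$. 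This produces \eqref{e:phip(xialpha)} and, as a byproduct, shows that the two one-sided slopes at $\xi_\alpha$ automatically coincide, so that a continuous paste with a well-defined (possibly vertical) tangent is always available.

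Having defined the candidate profile $\phi$ on $\R$, I would verify the weak formulation \eqref{e:def-tw}: $\phi$ is classical on each side of $\xi_\alpha$ by the regularity remark for \eqref{e:ODE} on sign-regions of $D$, and the integration-by-parts contribution across $\xi_\alpha$ vanishes since $D(\phi)\phi'\to 0$ there. Uniqueness up to shifts reduces to uniqueness of each semi-wavefront once the matching slope at $\alpha$ is prescribed, by a comparison argument in the $z$-variable as developed for \eqref{e:zIntro} in \cite{BCM1}. The trichotomy (i)--(iii) is then extracted from the endpoint analysis of the semi-wavefronts: in (i) the diffusivity is non-degenerate at both $0$ and $1$, so classicality and strict monotonicity follow throughout $\R\setminus\{\xi_\alpha\}$; in (ii) a degeneracy occurs at an endpoint but $c>c^*_{pn}$, so the profile remains classical, and strict monotonicity near the endpoint is controlled by the Dini derivatives of $g$ via the associated semi-wavefront propositions; in (iii), at the critical speed, the profile reaches the degenerate endpoint in finite $\xi$, and the stated one-sided slope is read from the first-order linearization $\dot{D}(\ell)\phi'+h(\ell)-c=0$ at $\ell\in\{0,1\}$, becoming $-\infty$ precisely when $\dot{D}(\ell)=0$.

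The principal obstacle I anticipate is the interplay of matching and uniqueness at $\alpha$: each of the singular problems \eqref{e:zIntro} is intrinsically non-unique, so one must show that requiring a common slope from the two sides selects exactly one trajectory on each side. This needs a careful phase-plane argument adapting the comparison machinery of \cite{BCM1} to the backward-parabolic region, which is the genuinely new analytic step.
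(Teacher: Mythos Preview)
Your strategy---paste a semi-wavefront from $1$ to $\alpha$ with one from $\alpha$ to $0$, then verify the weak formulation---is exactly the paper's. But your matching criterion is misstated, and this creates a genuine gap in the degenerate case.

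You frame the selection as ``requiring a common slope from the two sides.'' The paper instead selects, from each one-parameter family of semi-wavefronts (parametrized by $\ell=(D(\phi)\phi')(\xi_\alpha^+)\in[\beta(c),0]$ and $s=(D(\phi)\phi')(\xi_\alpha^-)\in[0,\gamma(c)]$, see Propositions~\ref{p:swf to zero} and~\ref{prop:swf from one}), the unique members $\phi_0,\psi_0$ with $\ell=s=0$. The weak formulation then forces this choice: integrating by parts across $\xi_\alpha$ leaves the boundary term $(s-\ell)\zeta(\xi_\alpha)$, which vanishes for all test functions only when $s=\ell=0$ (this is Remark~\ref{rem:swf choice}). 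Your slope-matching criterion is \emph{not} equivalent to this. When $\dot D(\alpha)=0$ and $c\le h(\alpha)$, Propositions~\ref{p:swf to zero} and~\ref{prop:swf from one} give $\phi_\ell'(\xi_\alpha^+)=-\infty$ for \emph{every} $\ell\in[\beta(c),0]$ and $\psi_s'(\xi_\alpha^-)=-\infty$ for every $s\in[0,\gamma(c)]$; the slopes ``match'' at $-\infty$ for any pair $(\ell,s)$, yet only $(0,0)$ yields a distributional solution. So your selection principle fails precisely in the case $\eqref{e:phip(xialpha)}_2$, and your derivation of \eqref{e:phip(xialpha)} via the characteristic quadratic, while correct for $\phi_0$ and $\psi_0$, does not by itself single them out.

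Relatedly, the ``principal obstacle'' you anticipate is not one: once the families are parametrized by $\ell$ and $s$ (which is already done in \cite{BCM1} and Proposition~\ref{prop:swf from one}), uniqueness is immediate from $s=\ell=0$, with no new phase-plane or comparison argument needed on the backward side. The formula \eqref{e:phip(xialpha)} then drops out of \eqref{e:phip alpha} and \eqref{e:222}, and items (i)--(iii) follow from \cite[Corollary~9.4]{BCM1} applied on each side, rather than from a fresh endpoint analysis.
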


We refer to Figure \ref{f:f2} for a pictorial representation of Theorem \ref{thm:wf}.
\begin{figure}[htb]
\begin{center}

\begin{tikzpicture}[>=stealth, scale=0.8]

\draw[->] (0,0) --  (10,0) node[below]{$\xi$} coordinate (x axis);
\draw[->] (4,0) -- (4,4) node[right]{$\phi$} coordinate (y axis);
\draw (0,3) -- (10,3);
\draw(4,3.3) node[left]{\footnotesize{$1$}};
\draw[dotted] (4,1.15) node[left=4,above=-2]{\footnotesize{$\alpha$}} -- (7,1.15);
\draw[dotted] (4,1.15) -- (3.2,1.15);
\draw[dotted] (5.2,0) node[below]{\footnotesize{$\xi_\alpha^1$}} -- (5.2,1.2);
\draw[dotted] (3.2,0) node[below]{\footnotesize{$\xi_\alpha^3$}} -- (3.2,1.2);
\draw[dotted] (7,0) node[below]{\footnotesize{$\xi_\alpha^2$}} -- (7,1.15);
\draw[dotted] (3,0) node[below=9, left=-3]{\footnotesize{$\xi_1^2$}} -- (3,3);
\draw[thick] (0,2.8) .. controls (3.5,2.8) and (5,0.2) .. node[above,midway]{$\phi^1$} (10,0.2); 
\draw[thick] (0,3) -- node[above, midway]{$\phi^2$} (3,3);
\draw[thick] (3,3) .. controls (3.8,2.5) and (7,2.5) .. (7,1.15); 
\draw[thick] (7,1.15) .. controls (7,0.4) and (9,0.3) .. node[above, near end]{$\phi^2$} (10,0.3); 
\draw[thick] (0,2.6) .. controls (2,2.6) and (3.8,1) .. node[below,near start]{$\phi^3$} (3.8,0) node[below]{\footnotesize{$\xi_0^3$}}; 
\draw[thick] (3.8,0) -- node[below, near end]{$\phi^3$}(10,0);
\end{tikzpicture}

\end{center}
\caption{\label{f:f2}{Some possible wavefronts joining $1$ with $0$ in case $\rm (D_{pn})$: a classical wavefront $\phi^1$ (for $c>c_{pn}^*$ and either $\dot D(\alpha)<0$ or $c>h(\alpha)$); a wavefront $\phi^2$ which is sharp at $1$, with finite right derivative at $\xi_1^2$ and $(\phi^2)'(\xi_\alpha^2)=-\infty$ (for $c=c_{n,l}^*>h(1)$, $D(1)=0$, $\dot D(1)>0$, $\dot D(\alpha)=0$ and $c\le h(\alpha)$); a wavefront $\phi^3$ which is sharp at $0$ with $(\phi^3)'(\xi_0^3)=-\infty$ (for $c=c_{p,r}^*> h(0)$, $D(0)=0=\dot D(0)$).}}
\end{figure}
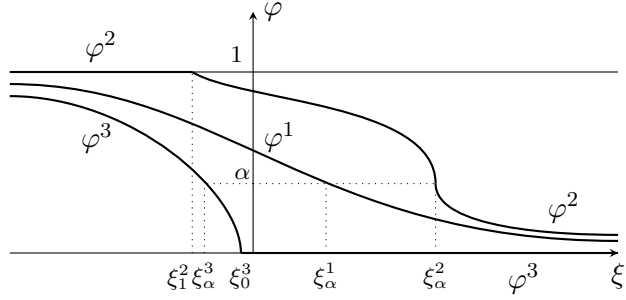
%
Theorem \ref{thm:wf} extends \cite[Theorem 1]{Maini-Malaguti-Marcelli-Matucci2006} to the case of a non-zero convection term $f$; if $f=0$ the estimates on $c_{pn}^*$ deduced from \eqref{e:c*pr} and \eqref{e:c*nl} coincide with those in \cite{Maini-Malaguti-Marcelli-Matucci2006}. Estimates \eqref{e:c*pr} and \eqref{e:c*nl} imply $c_{pn}^*\ge \max\left\{h(0), h(1)\right\}$. However, if $h(\phi)\ge h(0)$ for every $\phi$ in a right neighborhood of $0$, then $c_{p,r}^*>h(0)$, see \cite[Remark 6.4]{BCM1}, and so $c_{pn}^*>h(0)$. An analogous remark holds for the point $1$. This means that the lower estimate on $c_{pn}^*$ is not sharp, in general. Therefore we also extend the corresponding result of \cite{Maini-Malaguti-Marcelli-Matucci2006}, since they proved that the threshold $c^*$, which plays the role of $c_{pn}^*$ here, satisfies the stricter estimate $c^*>0$.

On the other hand, a new result that could not occur in \cite{Maini-Malaguti-Marcelli-Matucci2006} is that we can have $\phi'(\xi_\alpha)=-\infty$, see $\eqref{e:phip(xialpha)}_2$, while in the case $f=0$ profiles are always $C^1$ at $\xi_\alpha$. Explicit examples of fronts passing from a positive- to a negative-diffusivity region such that $\eqref{e:phip(xialpha)}_2$ occurs, are given in Example \ref{ex:alpha infinity slope}. From a formal point of view, if $f=0$ then this could occur if $c\le0$, see $\eqref{e:phip(xialpha)}_2$, while in that case $c>0$ holds; more rigorously, see \cite[(29)]{Maini-Malaguti-Marcelli-Matucci2006}. We emphasize that this phenomenon does not depend on the change of sign of $D$ at $\alpha$, but on the occurrence of the two conditions in $\eqref{e:phip(xialpha)}_2$, see \cite[Remark 9.3]{BCM1}. Moreover, just to get an insight on the problem, assume $\dot D(\alpha)=0$; in order that $\eqref{e:phip(xialpha)}_2$ takes place it is necessary that
\begin{equation}\label{e:batist}
\max\bigl\{\sup_{(0,\alpha]}\delta(f,0),\sup_{[\alpha,1)}\delta(f,1)\bigr\}\le h(\alpha),
\end{equation}
so that we have room for $c$. Condition \eqref{e:batist} has a simple geometric interpretation: the slope of the tangent to the graph of $f$ at $\alpha$ must be larger that the slope of any chord joining $(0,0)$ with any other point of the graph of $f$, in the interval in $(0,\alpha]$, and analogously for the interval $[\alpha,1)$. We easily see that $\sup_{(0,\alpha]}\delta(f,0)\le h(\alpha)$ fails if $f$ is concave in $[0,\alpha]$, while it holds if $f$ is convex; the converse result holds for the other condition. This means that $\eqref{e:phip(xialpha)}_2$ may hold only if $f$ changes its convexity. Moreover, if $f$ changes convexity only once, for instance at $\alpha$, then $f$ must be convex in $[0,\alpha]$ and concave in $[\alpha,1]$, and not conversely. In other words, the profile may become vertical  if, at least in some subintervals, the behavior of $f$ strongly contrast that of $D$.

We now comment on {\em (iii)}. The lack of results for the cases $c^*_{p,r}=h(0)$ or $c^*_{n,l}=h(1)$ is due to the fact that in these extremal cases the regularity depends on further properties of $D$ and $g$. Indeed, under the mild assumptions of Theorem \ref{thm:wf} the profile $\phi$ can be either sharp or classical when reaches the equilibria $0$ or $1$ (for explicit examples {\em at $0$}, we refer to \cite[Remark 10.1]{BCM1}; the discussion at $1$ is analogous).

We focus now on $\rm (D_{np})$. Condition {$\rm (D_{np})$} is specular to {$\rm (D_{pn})$}, in an obvious sense: if $D$ satisfies $\rm (D_{pn})$ then $-D$ satisfies $\rm (D_{np})$. Despite this fact, the results in this case only partially mimic those of Theorem \ref{thm:wf}. In particular, if we focus on the interval $(\beta,1)$, where $D>0$, the contrast with \cite[Theorem 2.2]{BCM1} emerges evident. Apart from a trivial horizontal translation, we are under the hypotheses considered in \cite[Theorem 2.7]{CM-DPDE} (compare  {$\rm (D_{np})$}-(g) in the interval $(\beta,1)$ with the corresponding assumptions of \cite[Theorem 2.7]{CM-DPDE}). In {\em contrast} with \cite[Theorem 2.2]{BCM1} cited before, \cite[Theorem 2.7]{CM-DPDE} affirms that, for {\em each} $c\in\R$, Equation \eqref{e:E} admits (unique up to space shifts) strict semi-wavefronts from $1$, connecting $1$ to $\beta$. Nevertheless, the system still admits a threshold speed, in the following sense. There exists $c^*_{p,l}\in \R$  such that the (unique up to space shifts) non-increasing wave profile $\phi$, defined maximally in $(-\infty,\xi_\beta)$, satisfies \cite[Theorem 2.6]{CM-DPDE}
 \begin{equation}
 \label{e:threshold cm-dpde condition}
 \left(D(\phi)\phi\rq{}\right)(\xi_\beta^-)=
 \left\{
\begin{array}{ll}
0 \ &\mbox{ if } \ c\ge c^*_{p,l},\\
\ell <0 \ &\mbox{ if }\ c<c^*_{p,l}.
\end{array}
\right.
 \end{equation}
By improving the estimates of $c^*_{p,l}$ in \cite{CM-DPDE}, as for \eqref{e:c*pr}-\eqref{e:MPintro}, it results that $c^*_{p,l}$ must satisfy
  \begin{multline}
 \label{e:c*pl}
 \max\left\{\sup_{(\beta,1]}\delta(f,\beta),h(\beta)+2\sqrt{\dot{D}(\beta)g(\beta)}\right\} \le c^*_{p,l} \le
 \\
  \sup_{(\beta,1]}\delta(f,\beta) +  2\sqrt{\sup_{\phi \in (\beta,1]}\frac{1}{\phi-\beta}\int_{\beta}^{\phi}\frac{D(s)g(s)}{s-\beta}\,ds}.
 \end{multline}
 See Proposition \ref{prop:cm-dpde}. Similarly, Equation \eqref{e:E} admits a (unique up to shifts) strict semi-wavefront to $0$, connecting $\beta$ to $0$, for every $c\in \R$. Moreover, there exists $c^*_{n,r} \in \R$ satisfying
  \begin{multline}
 \label{e:c*nr}
 \max\left\{\sup_{[0,\beta)}\delta(f,\beta),h(\beta)+2\sqrt{\dot{D}(\beta)g(\beta)}\right\} \le c^*_{n,r} \le
 \\
  \sup_{[0,\beta)}\delta(f,\beta) + 2\sqrt{\sup_{\phi \in [0,\beta)}\frac1{\beta-\phi}\int_{\phi}^{\beta}\frac{D(s)g(s)}{s-\beta}\,ds},
 \end{multline}
 such that the (unique up to shifts) non-increasing profile $\phi$, defined maximally in $(\xi_\beta, +\infty)$, satisfies
 \begin{equation}
 \label{e:threshold cm-dpde condition2}
 \left(D(\phi)\phi\rq{}\right)(\xi_\beta^+)=
 \left\{
\begin{array}{ll}
0 \ &\mbox{ if } \ c\ge c^*_{n,r},\\
s>0 \ &\mbox{ if }\ c< c^*_{n,r},
\end{array}
\right.
 \end{equation}
see Proposition \ref{prop:D1minus}. We set (see Figure \ref{f:c1})
\begin{equation}
\label{e:c*np}
c^*_{np}:=\max\left\{c^*_{n,r}, c^*_{p,l}\right\}.
\end{equation}

%
%

In the next theorem, $s_\pm(\beta, c)$ is given by \eqref{e:s}. Note that, in spite of $\dot{D}(\beta)\ge 0$, $s_\pm(\beta,c)$ is well-defined since in virtue of \eqref{e:c*np} and \eqref{e:c*pl} (or \eqref{e:c*nr}), we clearly have $(h(\beta)-c)^2 \ge (h(\beta) - c_{n,p}^*)^2 \ge 4\dot{D}(\beta)g(\beta)$.
\begin{theorem}
\label{thm:wf2}
Assume {\rm (f)}, {\rm (g)}, {$\rm (D_{np})$}. Then, Equation \eqref{e:E} admits a (unique up to space shifts) wavefront, with speed $c$ and profile $\phi$ satisfying \eqref{e:infty}, if and only if $c\ge c^*_{np}$.

For such $c$, we have $\phi\rq{}<0$ if $\phi \in (0,1)\setminus\left\{\beta\right\}$. There exists a unique $\xi_\beta \in\R$ such that $\phi(\xi_\beta)=\beta$ and
\begin{multline}
\label{e:reg at beta}
\phi\rq{}({\xi_\beta}^-)=
\begin{cases}
\frac{g\left(\beta\right)}{s_-(\beta,c)} \ &\mbox{ if } \ c>c^*_{p,l},\\[2mm]
\frac{g\left(\beta\right)}{s_+(\beta,c^*_{p,l})} \ &\mbox{ if } \ c=c^*_{p,l} \ \mbox{ and } \ \dot{D}(\beta)>0,\\
-\infty \ &\mbox{ if } \ c=c_{p,l}^* \ \mbox{ and } \ \dot{D}(\beta)=0,
\end{cases}
\\
\phi\rq{}({\xi_\beta}^+)=
\begin{cases}
\frac{g\left(\beta\right)}{s_-(\beta,c)} \ &\mbox{ if } \ c>c^*_{n,r},\\[2mm]
\frac{g\left(\beta\right)}{s_+(\beta,c^*_{n,r})} \ &\mbox{ if } \ c=c^*_{n,r} \  \mbox{ and } \ \dot{D}(\beta)>0,\\
-\infty \ &\mbox{ if } \ c=c_{n,r}^* \ \mbox{ and } \ \dot{D}(\beta)=0.
\end{cases}
\end{multline}

\noindent
At last, the following holds true:
\begin{enumerate}[(i)]

\item if $D(0)D(1)<0$ then $\phi$ is classical in $\R\setminus\left\{\xi_\beta\right\}$;

\item if $D(0)=0$, and either $c>h(0)$ or $c=h(0)$ and $\dot{D}(0)<0$, then $\phi$ is classical at $0$; if $D(1)=0$, and either $c>h(1)$ or $c=h(1)$ and $\dot{D}(1)<0$ then $\phi$ is classical at $1$;

\item if $D(0)=0$ and $c<h(0)$, then $\phi$ is sharp at $0$ (reached at some $\xi_0 > \xi_\beta$);
if $D(1)=0$ and $c<h(1)$, then $\phi$ is sharp at $1$ (reached at some $\xi_1 < \xi_\beta$). In these cases
we have
\begin{eqnarray*}
 \phi\rq{}(\xi_0^-)=
\left\{
\begin{array}{ll}
\frac{h(0)-c}{\dot{D}(0)}&\mbox{ if }\dot{D}(0)<0,\\[2mm]
-\infty \ &\mbox{ if } \dot{D}(0)=0,
\end{array}
\right. \  \phi\rq{}(\xi_1^+)=
\left\{
\begin{array}{ll}
\frac{h(1)-c}{\dot{D}(1)} &\mbox{ if }\dot{D}(1)<0,\\[2mm]
-\infty &\mbox{ if }\dot{D}(1)=0.
\end{array}
\right.
\end{eqnarray*}
\end{enumerate}
\end{theorem}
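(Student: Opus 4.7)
The plan is to build the wavefront by gluing two semi-wavefronts at the common level $\beta$, in the spirit of the proof of Theorem \ref{thm:wf}. For $c\ge c^*_{np}$, Proposition \ref{prop:cm-dpde} furnishes a strict semi-wavefront $\phi_1$ from $1$ to $\beta$, defined maximally on some $(-\infty,\xi_\beta^1)$; since $c\ge c^*_{p,l}$, the threshold condition \eqref{e:threshold cm-dpde condition} gives $(D(\phi_1)\phi_1')(\xi_\beta^-)=0$. Symmetrically, Proposition \ref{prop:D1minus} supplies a strict semi-wavefront $\phi_2$ from $\beta$ to $0$ on some $(\xi_\beta^2,+\infty)$, with $(D(\phi_2)\phi_2')(\xi_\beta^+)=0$ since $c\ge c^*_{n,r}$. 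After translating $\phi_2$ so that $\xi_\beta^1=\xi_\beta^2=:\xi_\beta$, the pasted function $\phi$ is continuous, and the matching of the fluxes at $\xi_\beta$ entails that $D(\phi)\phi'$ is absolutely continuous on $\R$; hence the integral identity \eqref{e:def-tw} follows directly by summing the contributions of the two pieces.

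For necessity, suppose by contradiction that $c<c^*_{np}$ admits a wavefront $\phi$ satisfying \eqref{e:infty}. Then $c<c^*_{p,l}$ or $c<c^*_{n,r}$; say the former. The restriction of $\phi$ to the left of the (unique) point $\xi_\beta$ where $\phi=\beta$ is a non-increasing semi-wavefront from $1$ to $\beta$, hence coincides with the one given by Proposition \ref{prop:cm-dpde}, and \eqref{e:threshold cm-dpde condition} forces $(D\phi')(\xi_\beta^-)=\ell<0$. On the other hand, since $\phi'\le0$ and $D<0$ on $(0,\beta)$, the product $D(\phi)\phi'$ is nonnegative in a right neighborhood of $\xi_\beta$; because \eqref{e:ODE} makes $D(\phi)\phi'$ absolutely continuous, such a jump is forbidden, a contradiction. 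Uniqueness (up to shift) is inherited from the uniqueness of each semi-wavefront, as is the strict sign of $\phi'$ on $(0,1)\setminus\{\beta\}$.

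To establish \eqref{e:reg at beta}, I reduce to the scalar equation $\dot z(\varphi)=h(\varphi)-c-D(\varphi)g(\varphi)/z(\varphi)$ for $z(\varphi)=D(\varphi)\phi'(\phi^{-1}(\varphi))$, treated separately on $(\beta,1)$ and $(0,\beta)$. Since $z(\beta^\pm)=0$ and $D\in C^1$, L'Hopital's rule applied to $D(\varphi)g(\varphi)/z(\varphi)$ shows that the one-sided limit $\dot z(\beta^\pm)$ must solve the quadratic $s^2-(h(\beta)-c)s+\dot D(\beta)g(\beta)=0$, whose roots are precisely $s_\pm(\beta,c)$. Above threshold ($c>c^*_{p,l}$) the branch selected by the semi-wavefront is $s_+$, and via the Vi\`ete identity $s_-s_+=\dot D(\beta)g(\beta)$ one obtains $\phi'(\xi_\beta^-)=\dot z(\beta^+)/\dot D(\beta)=g(\beta)/s_-(\beta,c)$; at the threshold the two roots swap the role and $g(\beta)/s_+(\beta,c^*_{p,l})$ appears; when additionally $\dot D(\beta)=0$, this value becomes $-\infty$, since $s_+\to 0^-$. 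The analysis at $\xi_\beta^+$ is completely analogous.

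Items (i)--(iii) follow from a local analysis at the equilibria. If $D(0)\ne0$ (resp.\ $D(1)\ne0$) the ODE \eqref{e:ODE} is non-degenerate and $\phi$ is classical near $0$ (resp.\ $1$), giving (i). When $D(0)=0$, the chain rule $(D(\phi)\phi')'=\dot D(\phi)(\phi')^2+D(\phi)\phi''$ combined with \eqref{e:ODE} shows that a putative sharp point $\xi_0$ with $\phi'(\xi_0^-)=L\in(-\infty,0)$ forces $\dot D(0)L=h(0)-c$; if $c>h(0)$, or $c=h(0)$ with $\dot D(0)<0$, this equation has no admissible negative root, so $\phi$ reaches $0$ only as $\xi\to+\infty$, yielding (ii). If instead $c<h(0)$, the same relation determines $\phi'(\xi_0^-)=(h(0)-c)/\dot D(0)<0$ when $\dot D(0)<0$, and $=-\infty$ when $\dot D(0)=0$; existence of a finite $\xi_0$ is obtained by integrating $\phi'=z/D$ near $\varphi=0$, using $z(0^+)=h(0)-c>0$ to get an integrable bound on $1/\phi'$. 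The argument at $1$ is symmetric. The main obstacle I anticipate is the selection of the correct branch $s_\pm$ at the threshold $c=c^*_{p,l}$ (or $c=c^*_{n,r}$), where a careful sub/super-solution comparison in the reduced first-order problem is needed to identify which root is actually realized.
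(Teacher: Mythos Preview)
Your overall strategy matches the paper's: glue the two semi-wavefronts from Propositions~\ref{prop:cm-dpde} and~\ref{prop:D1minus} at $\xi_\beta$ using the vanishing of $D(\phi)\phi'$ there, and for necessity decompose a given wavefront into its two halves and invoke the thresholds. The paper obtains $(D\phi')(\xi_\beta^\pm)=0$ directly by testing the weak form~\eqref{e:def-tw} against functions supported near $\xi_\beta$ and integrating by parts (see~\eqref{e:newnew}); your route via continuity of $D(\phi)\phi'$ plus the sign observation $D\phi'\ge0$ on the right is equally valid and perhaps more transparent, but you should justify the continuity from the distributional identity~\eqref{e:def-tw} rather than from the classical equation~\eqref{e:ODE}, which is what actually forces $D(\phi)\phi'-f(\phi)+c\phi\in C^1$.

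For \eqref{e:reg at beta} and items \emph{(i)}--\emph{(iii)} the paper simply reads off the already-proved formulas \eqref{e:phiprime betaminus}, \eqref{e:phiprime betaplus} and parts \emph{(i)}--\emph{(iii)} of Propositions~\ref{prop:cm-dpde} and~\ref{prop:D1minus}; there is no need to redo the first-order analysis. Your direct derivation via $\dot z$ is correct in spirit, but a few details are off: the quantity you call $z(0^+)=h(0)-c$ should be $\dot z(0^+)=h(0)-c$ (since $z(0)=0$ whenever $D(0)=0$); in your argument for \emph{(ii)} you only exclude a sharp point with \emph{finite} slope $L\in(-\infty,0)$ and still need to rule out $L=-\infty$; and, as you yourself note, the branch selection $s_+$ versus $s_-$ at and above the thresholds requires the sub/super-solution comparison from \cite{BCM1}, which is exactly what the cited propositions package for you.
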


\begin{figure}[htb]
\begin{center}

\begin{tikzpicture}[>=stealth, scale=0.8]

\draw[->] (0,0) --  (10,0) node[below]{$\xi$} coordinate (x axis);
\draw[->] (4,0) -- (4,4) node[right]{$\phi$} coordinate (y axis);
\draw (0,3) -- (10,3);
\draw(4,3.3) node[left]{\footnotesize{$1$}};
\draw[dotted] (4,1.5) node[right=5,below=-2]{\footnotesize{$\beta$}} -- (10,1.5);
\draw[dotted] (0,1.5) -- (4,1.5);
\draw[thick] (0,2.5) .. controls (1.5,2.4) and (0,0.2) .. node[below,very near start]{$\phi^1$} (10,0.1); 
\draw[thick] (0,2.6) .. controls (3,2.3) and (3.5,2.5) .. node[below,midway]{$\phi^2$} (3.5,1.5); 
\draw[thick] (3.5,1.5) .. controls (3.5,0.9) and (4,0.2) .. (10,0.2); 
\draw[thick] (0,2.7) .. controls (3,2.5) and (3.5,2.5) .. node[above,very near end]{$\phi^3$} (5,1.5); 
\draw[thick] (5,1.5) .. controls (5,1) and (5,0.3) .. (10,0.3); 
\draw[thick] (0,2.8) .. controls (3,2.8) and (6,2.8) .. (6,1.5); 
\draw[thick] (6,1.5) .. controls (7,0.5) and (7,0.5) .. node[above, midway]{$\phi^4$} (10,0.4); 
\draw[thick] (0,2.9) .. controls (3,2.9) and (6,2.8) .. (7.5,1.5); 
\draw[thick] (7.5,1.5) .. controls (8,0.5) and (8,0.5) .. node[above,near end]{$\phi^5$} (10,0.5); 
\end{tikzpicture}

\end{center}
\caption{\label{f:f2s2}{Some possible wavefronts joining $1$ with $0$ in case $\rm (D_{np})$; Profiles are labelled according to the cases (1)--(4) of Remark \ref{rem:reg at beta}; $\phi^5$ occurs in both cases (3) and (4). For simplicity we only represented strictly monotone profiles.}}
\end{figure}
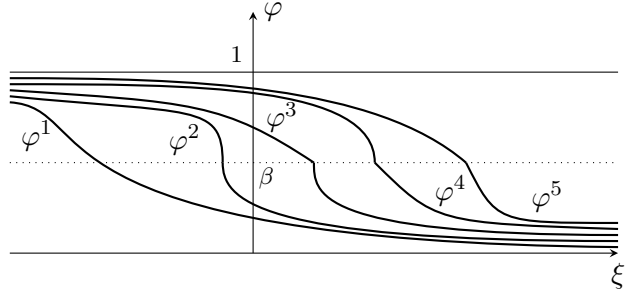

About the regularity of $\phi$ at $\xi_\beta$, see also Remark \ref{rem:reg at beta}. From \eqref{e:c*pl} and \eqref{e:c*nr}, we have $c^*_{np} \ge h(\beta)$ and $c^*_{np}>h(\beta)$ if $\dot{D}(\beta)>0$. Hence,  all quantities represented as $s_\pm(\beta,\cdot)$ in \eqref{e:reg at beta} are negative and finite real values.

Theorem \ref{thm:wf2} above extends \cite[Theorem 1]{Bao-Zhou2014} to the case of a non-zero convection term $f$; if $f=0$, the estimates on $c_{np}^*$ deduced from \eqref{e:c*pl} and \eqref{e:c*nr} improve those in \cite{Bao-Zhou2014}, not only because of a flaw  in the upper estimate in $c^*$ (the analog of $c_{np}^*$) in \cite{Bao-Zhou2014}, see formula (14) there, but also because the more precise estimates from \cite{BCM1} and \cite{Marcelli-Papalini} are involved here (and not in \cite{Bao-Zhou2014}). Moreover, by $c_{np}^*\ge h(\beta)$, with a reasoning as above Theorem \ref{thm:wf} we can further show, as in \cite{Bao-Zhou2014}, that $c^*_{np}>0$ when $f=0$. In Theorem \ref{thm:wf2} we also prove the existence of sharp profiles at either $0$ or $1$ even if $c>c_{np}^*$, see case {\em (iii)} above. Again, this is due to the  presence of $f$. Indeed, from a formal point of view, by the estimates in {\em (iii)} it follows that, if $f=0$, then to have sharp profiles one needs $c<0$, while $c$ is always positive in this case.

We point out that, if $c^*_{p,l} > c^*_{n,r}$ and $\phi$ is the profile corresponding to $c=c^*_{np}=c^*_{p,l}$ given by Theorem \ref{thm:wf2}, then $\phi\rq{}(\xi_\beta^-)\neq \phi\rq{}(\xi_\beta^+)$. The same conclusion holds if $c^*_{p,l} < c^*_{n,r}$ and $c=c^*_{np}=c^*_{n,r}$. Both alternatives $c^*_{p,l} =c^*_{n,r}$ and $c^*_{p,l}\neq c^*_{n,r}$ can indeed occur: explicit examples are shown in Example \ref{ex:n-p gap}.
This suggests that Theorems \ref{thm:wf} and \ref{thm:wf2} produce two separate families of solutions. Moreover, in Example \ref{ex:essentially different}, we show that the thresholds $c^*_{pn}$ of Theorem \ref{thm:wf} and $c^*_{np}$ of Theorem \ref{thm:wf2} are {\em essentially} different, in the sense that taking opposite diffusivities do {\em not} produce necessarily $c^*_{pn} = c^*_{np}$. Roughly speaking, this is due essentially because \eqref{e:c*pr}--\eqref{e:c*nl} and \eqref{e:c*pl}--\eqref{e:c*nr} are unrelated estimates.

\smallskip


Also notice the different role played by the sub-thresholds $c_{p,r}^*$, $c_{n,l}^*$ and $c_{p,l}^*$, $c_{n,r}^*$: the former two discriminate the {\em existence} of the semi-wavefronts, the latter two the {\em regularity}. Indeed, the estimates \eqref{e:c*pr}, \eqref{e:c*nl} concern the equilibrium points $0,1$ of $g$, while the estimates \eqref{e:c*pl}, \eqref{e:c*nr} only concern the point $\beta$ where $D$ vanishes. The values of $\phi'$ at $\xi_\alpha$ or $\xi_\beta$: $\phi'(\xi_\alpha)$ is uniquely determined (being possibly $-\infty$), while we can have $\phi'(\xi_\beta^-)\ne \phi'(\xi_\beta^+)$. Moreover, in the case $\rm (D_{pn})$, a profile may be sharp only if $c=c_{pn}^*$; in case $\rm (D_{np})$, a profile can be sharp also if $c>c_{np}^*$. As a consequence of these facts, items {\em (i)}--{\em (iii)} in the two theorems are similar but far from being symmetric.

\section{Wavefronts with positive to negative diffusivities}\label{sec:dpn}
\setcounter{equation}{0}
In this section we assume condition {$(\rm D_{pn})$}. First, we discuss the existence of semi-wavefronts to $0$ and from $1$ in Proposition \ref{p:swf to zero} and in Proposition \ref{prop:swf from one}, respectively. Then, by pasting their profiles,  we provide the proof of Theorem \ref{thm:wf}.

In the next proposition $s_-(\alpha,c)$ is given by \eqref{e:s}.

\begin{proposition}
\label{p:swf to zero}
Assume {\rm (f)}, {\rm (g)}, {$(\rm D_{pn})$},  $\eqref{Dg}_1$.  Then, there exists $c_{p,r}^*$ satisfying \eqref{e:c*pr} such that Equation \eqref{e:E} has strict semi-wavefronts to $0$, connecting $\alpha$ to $0$, if and only if $c\geq c^*_{p,r}$. If $\phi$ is the  non-increasing profile of one of them, then
\begin{equation}
\label{strict monotonicity}
\varphi\rq{}(\xi) < 0 \ \mbox{ for any } \ 0<\varphi(\xi) < \alpha.
\end{equation}
For $c>c^*_{p,r}$, there exists $\beta(c)<0$ such that every profile is uniquely determined (up to space shifts) by the value
\begin{equation}
\label{e:ell}
\left(D\left(\phi\right)\phi\rq{}\right)(\xi_\alpha^+)=:\ell \in [\beta(c), 0],
\end{equation}
where $\xi_\alpha \in \R$ is such that $(\xi_\alpha, +\infty)$ is the maximal-existence interval of $\phi$.
\par
If $\phi_\ell$ is the profile satisfying \eqref{e:ell}, then $\phi_\ell\rq{}(\xi_\alpha^+)=-\infty$ if $\ell\in[\beta(c),0)$, while
\begin{equation}
\label{e:phip alpha}
\phi_0\rq{}(\xi_\alpha^+)=
\begin{cases}
\frac{g(\alpha)}{s_-(\alpha, c)} \ &\mbox{ if } \ \dot{D}(\alpha)<0 \ \mbox{ or } \ c>h(\alpha) ,\\
-\infty \ &\mbox{ if } \ \dot{D}(\alpha)=0  \mbox{ and } c\leq h(\alpha).
\end{cases}
\end{equation}
\end{proposition}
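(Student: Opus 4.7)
The plan is to reduce equation \eqref{e:ODE} on the range $\varphi\in(0,\alpha)$ to the singular first-order problem $\eqref{e:zIntro}_1$ via the classical substitution $z(\varphi):=D(\varphi)\phi'(\phi^{-1}(\varphi))$. Under this correspondence a non-increasing semi-wavefront to $0$ on a maximal interval $(\xi_\alpha,+\infty)$, with $\phi(\xi_\alpha^+)=\alpha$ and $\phi(+\infty)=0$, corresponds exactly to a solution $z\in C^0[0,\alpha]\cap C^1(0,\alpha)$ of $\eqref{e:zIntro}_1$ with $z<0$ on $(0,\alpha)$ and $z(0)=0$; the profile is then recovered, up to a horizontal shift fixing $\xi_\alpha$, by inverting $\xi'(\varphi)=D(\varphi)/z(\varphi)$ and integrating. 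Existence of such a $z$ together with the two-sided bound \eqref{e:c*pr} is exactly \cite[Theorem 2.2]{BCM1}, so the threshold characterization can be quoted directly. Strict monotonicity \eqref{strict monotonicity} then follows at once from $\phi'(\xi)=z(\phi(\xi))/D(\phi(\xi))$, since $z<0$ and $D>0$ on $(0,\alpha)$.

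For $c>c^*_{p,r}$ the new ingredient is the parameterization of profiles by $\ell:=z(\alpha^-)\in[\beta(c),0]$, which I would obtain via a shooting argument. For each $\ell<0$ the Cauchy problem obtained by imposing $z(\alpha)=\ell$ on the equation in $\eqref{e:zIntro}_1$ is regular at $\alpha$ (the singularity $Dg/z$ is removable there, since $D(\alpha)=0$ and $\ell\neq 0$), and admits a unique backward continuation as long as $z$ stays strictly negative. Continuous dependence together with the comparison-type monotonicity of the flow in $\ell$, of the kind developed in \cite{BCM1}, imply that the set of $\ell\leq 0$ for which the maximal solution reaches $\varphi=0$ with $z(0^+)=0$ is a closed interval $[\beta(c),0]$. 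The strict inequality $c>c^*_{p,r}$ guarantees that the sub- and super-solutions used to build $z_0$ can be taken strictly, so that $\ell$ slightly below $0$ still yields an admissible solution and $\beta(c)<0$; on the other hand, sufficiently negative $\ell$ forces $z_\ell$ to vanish at some $\varphi_0>0$ by comparison, so $\beta(c)>-\infty$. Uniqueness of $\phi_\ell$ up to shifts is then inherited from uniqueness of $z_\ell$.

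Finally, the values of $\phi_\ell'(\xi_\alpha^+)$ are read off from $\phi'=z(\phi)/D(\phi)$. For $\ell\in[\beta(c),0)$ we have $z(\alpha^-)=\ell<0$ and $D(\alpha^-)\to 0^+$, so $\phi_\ell'(\xi_\alpha^+)=-\infty$. For $\ell=0$ the limit is of indeterminate form $0/0$; I would set $\mu:=\phi_0'(\xi_\alpha^+)\in[-\infty,0]$, write $z(\varphi)=D(\varphi)\mu+o(\alpha-\varphi)$ near $\alpha$, substitute into $\eqref{e:zIntro}_1$, and pass to the limit $\varphi\to\alpha^-$ to obtain
\begin{equation*}
\dot{D}(\alpha)\,\mu^2-(h(\alpha)-c)\,\mu+g(\alpha)=0.
\end{equation*}
If $\dot{D}(\alpha)<0$ this quadratic has two real roots of opposite sign, and the negative one, after rationalization, equals $g(\alpha)/s_-(\alpha,c)$. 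If $\dot{D}(\alpha)=0$ and $c>h(\alpha)$ the relation becomes linear and yields $\mu=g(\alpha)/(h(\alpha)-c)=g(\alpha)/s_-(\alpha,c)$. If $\dot{D}(\alpha)=0$ and $c\leq h(\alpha)$ no finite negative solution exists, forcing $\mu=-\infty$. This reproduces \eqref{e:phip alpha}. The main technical obstacle throughout is this doubly singular analysis at $\alpha$, where $D$, $z$ and $Dg/z$ all degenerate simultaneously in the case $\ell=0$, requiring L'H\^opital-type reasoning on the $z$-equation rather than elementary limits.
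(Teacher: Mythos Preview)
Your approach coincides with the paper's: both reduce to the singular first-order problem $\eqref{e:zIntro}_1$ and invoke the results of \cite{BCM1}. The paper's proof is in fact purely a citation---existence, the threshold bounds and strict monotonicity are referred to \cite[Theorem~2.2]{BCM1}, the parameterization \eqref{e:ell} to \cite[Proposition~5.1]{BCM1}, and formula \eqref{e:phip alpha} to \cite[(9.2) and Remark~9.2]{BCM1}. Your sketch of the quadratic analysis at $\alpha$ is correct and reproduces what those references establish rigorously.

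There is, however, one incorrect step in your shooting argument for $\beta(c)>-\infty$. You claim that for $\ell$ sufficiently negative the backward solution $z_\ell$ must vanish at some $\varphi_0\in(0,\alpha)$. This cannot happen: since distinct solutions of $\dot z=h-c-Dg/z$ (with $Dg>0$ and $z<0$) do not cross, $\ell<0$ forces $z_\ell<z_0<0$ throughout $(0,\alpha)$, so $z_\ell$ stays strictly negative there. The actual obstruction sits at the other endpoint: for $\ell$ too negative, $z_\ell$ reaches $\varphi=0$ with $z_\ell(0^+)<0$, violating the boundary condition $z(0)=0$ that encodes $\phi(+\infty)=0$ (equivalently, the continuity of $D(\phi)\phi'$ required by the weak formulation at the point where $\phi$ attains $0$). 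This is the mechanism that bounds the admissible $\ell$ from below; monotone dependence on $\ell$ then yields that the admissible set is a closed interval $[\beta(c),0]$. With this correction your outline matches \cite[Proposition~5.1]{BCM1}.
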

\begin{proof}
We refer to Figure \ref{f:f1b}, solid lines. The existence of these semi-wavefronts, the estimate of $c^*_{p,r}$ and condition \eqref{strict monotonicity} are discussed in \cite[Theorem 2.2]{BCM1}. The proof of \cite[Theorem 2.2]{BCM1} also includes the equivalence between the presence of a semi-wavefront with speed $c$ and the solvability of the problem \eqref{e:zIntro}$_1$
 for the same $c$, with $z(\varphi)=D(\varphi)\varphi'$. Conditions \eqref{e:ell} and \eqref{e:phip alpha} then come, respectively,  from  \cite[Proposition 5.1]{BCM1} and   \cite[(9.2)]{BCM1} (see also \cite[Remark 9.2]{BCM1}) which are directly formulated for problem \eqref{e:zIntro}$_1$.
\end{proof}

\begin{figure}[htb]
\begin{center}
\begin{tikzpicture}[>=stealth, scale=0.6]
\draw[->] (0,0)  --  (6,0) node[below]{$\rho$} coordinate (x axis);
\draw[->] (0,0) -- (0,4) coordinate (y axis);
\draw[thick] (0,1) .. controls (1,3) and (2,3) .. (3.2,0) node[right=5, above=0]{\footnotesize{$\alpha$}} node[near end, below=3]{\footnotesize{$D$}};
\draw[thick, dashed] (3.2,0) .. controls (4,-2) and (4.5,-2) .. (5,-1);
\draw[dotted] (5,-1) -- (5,0) node[below=6,right=-3]{\footnotesize{$1$}};
\draw[thick] (0,0) node[below]{\footnotesize{$0$}} .. controls (1,2) and (1.6,3) .. (3.2,2.7) node[very near end, above]{\footnotesize{$g$}}; 
\draw[thick, dashed] (3.2,2.7) .. controls (3.8,2.55) and (4.3,2) .. (5,0); 
\draw[dotted] (3.2,0) -- (3.2,2.7);
\begin{scope}[xshift=8cm]
\draw[->] (0,0) --  (8,0) node[below]{$\xi$} coordinate (x axis);
\draw[->] (4,0) -- (4,4) node[right]{$\phi$} coordinate (y axis);
\draw (0,2) -- (8,2);
\draw (0,3.6) -- (8,3.6);
\draw[thick] (3,2) .. controls (4,1) and (6,0.2) .. (8,0.2); 
\draw[thick, dashed] (0,3.4) .. controls (1.5,3.4) and (2,3) .. (3,2); 
\draw[dotted] (3,0)  node[below]{$\xi_\alpha$} -- (3,2);
\draw(4,2.3) node[left]{\footnotesize{$\alpha$}};
\draw(3,4.1) node[left]{\footnotesize{$1$}};
\end{scope}
\end{tikzpicture}
\end{center}
\caption{\label{f:f1b}{$D$ satisfies $(\rm D_{pn})$: Left: the plots of $D$, $g$ in $[0,\alpha]$ (solid lines) and in $[\alpha,1]$ (dashed lines); right, a corresponding profile.}}
\end{figure}
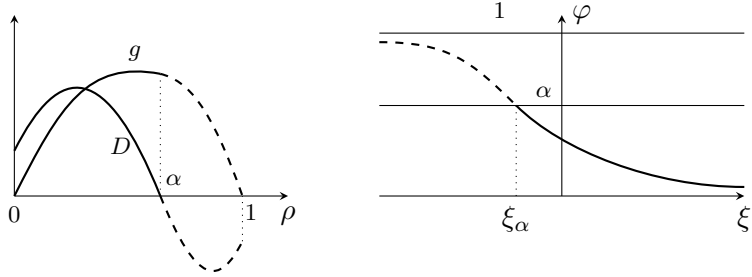

\begin{remark}
\label{rem:xi0}
{
\rm
Every semi-wavefront $\phi$ given in Proposition \ref{p:swf to zero} satisfies $\left(D\left(\phi\right) \phi\rq{}\right)(\xi_0^-)=0$,
(see e.g. \cite[formula (9.19)]{BCM1}), where $\xi_0= \sup\left\{ \xi>\xi_\alpha : \phi(\xi)>0 \right\} \in (\xi_\alpha, +\infty]$.
}
\end{remark}

\begin{proposition}
\label{prop:swf from one}
Assume {\rm (f)}, {\rm (g)}, {$(\rm D_{pn})$} and $\eqref{Dg}_2$. There exists $c^*_{n,l}\in \R$ satisfying \eqref{e:c*nl} such that Equation \eqref{e:E} admits strict semi-wavefronts from $1$, connecting $1$ to $\alpha$, with speed $c$, if and only if $c\geq c^*_{n,l}$. If $\phi$ is the non-increasing profile of one of such fronts, then it holds that
\begin{equation}
\label{strict monotonicity 2}
\phi\rq{}(\xi) < 0 \ \mbox{ if } \ \alpha < \phi(\xi) < 1.
\end{equation}
For $c>c^*_{n,l}$, there exists $\gamma(c)>0$ such that every profile is uniquely determined (up to space shifts) by the value
\begin{equation}
\label{e:ell2}
\left(D\left(\phi\right)\phi\rq{}\right)(\xi_\alpha^-)=:s \in [0,\gamma(c)],
\end{equation}
where $\xi_\alpha \in \R$ is such that $(-\infty, \xi_\alpha)$ is the maximal-existence interval of $\phi$.
\par
If $\phi_s$ is the profile satisfying \eqref{e:ell2}, then $\phi_s\rq{}(\xi_\alpha^-)=-\infty$ if $s\in (0, \gamma(c)]$, while
\begin{equation}
\label{e:222}
\phi_0\rq{}(\xi_\alpha^-)=
\begin{cases}
\frac{g(\alpha)}{s_-(\alpha, c)} \ &\mbox{ if } \ \dot{D}(\alpha)<0 \ \mbox{ or } \ c>h(\alpha) ,\\
-\infty \ &\mbox{ if } \ \dot{D}(\alpha)=0  \mbox{ and } c\leq h(\alpha).
\end{cases}
\end{equation}
\end{proposition}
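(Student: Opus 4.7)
\begin{proofof}{Proposition \ref{prop:swf from one} (sketch of plan)}
The plan is to reduce the statement to the already-established Proposition~\ref{p:swf to zero} by a ``reflection'' in both dependent and independent variables, rather than re-running the comparison arguments from \cite{BCM1} in the backward region. Concretely, given a candidate semi-wavefront $\phi\colon(-\infty,\xi_\alpha)\to[\alpha,1]$ from $1$ to $\alpha$, I introduce
\[
y(\eta):=1-\phi(-\eta),\qquad \eta\in(-\xi_\alpha,+\infty),
\]
together with the transformed coefficients
\[
\tilde D(y):=-D(1-y),\quad \tilde g(y):=g(1-y),\quad \tilde f(y):=f(1)-f(1-y),\quad \tilde h(y)=\dot{\tilde f}(y)=h(1-y).
\]
A direct computation using $y'(\eta)=\phi'(-\eta)$ and $y''(\eta)=-\phi''(-\eta)$ shows that \eqref{e:ODE} for $\phi$ at speed $c$ is equivalent to
\[
\bigl(\tilde D(y)y'\bigr)'+\bigl(c-\tilde h(y)\bigr)y'+\tilde g(y)=0,
\]
and $y$ is a non-increasing profile on $(-\xi_\alpha,+\infty)$ with $y(-\xi_\alpha^+)=1-\alpha$ and $y(+\infty)=0$. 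Thus a semi-wavefront of \eqref{e:E} from $1$ to $\alpha$ is the same thing as a semi-wavefront of the transformed equation \emph{to $0$}, connecting $\tilde\alpha:=1-\alpha$ to $0$.

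The first step is to verify that the hypotheses of Proposition~\ref{p:swf to zero} hold for the transformed triple $(\tilde f,\tilde D,\tilde g)$ on $[0,1]$. Assumptions (f) and (g) are immediate; for $\rm (D_{pn})$, one checks $\tilde D>0$ on $(0,\tilde\alpha)$, $\tilde D(\tilde\alpha)=0$ with $\dot{\tilde D}(\tilde\alpha)=\dot D(\alpha)\le 0$, and $\tilde D<0$ on $(\tilde\alpha,1)$, so $\tilde D$ satisfies $\rm (D_{pn})$ with the new zero at $\tilde\alpha$. For the Dini-type assumption, with $\phi=1-y$ one has
\[
\frac{\tilde D(y)\tilde g(y)}{y}=\frac{-D(\phi)g(\phi)}{1-\phi}=\frac{D(\phi)g(\phi)-D(1)g(1)}{\phi-1},
\]
so $D_+(\tilde D\tilde g)(0)=D_-(Dg)(1)$ and $D^+(\tilde D\tilde g)(0)=D^-(Dg)(1)$; in particular, $\eqref{Dg}_2$ for $(D,g)$ is exactly $\eqref{Dg}_1$ for $(\tilde D,\tilde g)$. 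The analogous identity $\delta(\tilde f,0)(y)=\delta(f,1)(1-y)$ shows that the three suprema in \eqref{e:c*pr} applied to $(\tilde f,\tilde D,\tilde g)$ on $(0,\tilde\alpha]$ coincide with the three suprema in \eqref{e:c*nl} on $[\alpha,1)$. Defining $c^*_{n,l}$ to be the threshold $\tilde c^*_{p,r}$ produced by Proposition~\ref{p:swf to zero}, the bound \eqref{e:c*nl} follows at once.

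Finally, I translate the structural conclusions back. Existence/non-existence for $c\gtreqless c^*_{n,l}$ and the strict-monotonicity statement \eqref{strict monotonicity 2} come from the corresponding statements for $y$. For the parameterization, Proposition~\ref{p:swf to zero} gives for $c>c^*_{n,l}$ a unique profile $y_\ell$ (up to shifts) for each $(\tilde D(y)y')(\tilde\xi_{\tilde\alpha}^+)=\ell\in[\beta(c),0]$; since $\tilde D(y)y'=-D(\phi)\phi'$ at corresponding points, setting $s:=-\ell$ and $\gamma(c):=-\beta(c)>0$ gives \eqref{e:ell2}. For the derivative at $\xi_\alpha$, observe that $y'(\eta)=\phi'(-\eta)$ gives $\phi'(\xi_\alpha^-)=y'(\tilde\xi_{\tilde\alpha}^+)$, while the quantity $s_-$ is invariant under the transformation: writing out \eqref{e:s} for $(\tilde f,\tilde D,\tilde g)$ at $\tilde\alpha$ one finds $\tilde h(\tilde\alpha)=h(\alpha)$, $\dot{\tilde D}(\tilde\alpha)\tilde g(\tilde\alpha)=\dot D(\alpha)g(\alpha)$, hence $s_-(\tilde\alpha,c)=s_-(\alpha,c)$. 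The two cases of \eqref{e:phip alpha} for $y$ therefore translate verbatim into the two cases of \eqref{e:222}, and similarly the infinite-slope case for $\ell\ne 0$ yields $\phi_s'(\xi_\alpha^-)=-\infty$ for $s\in(0,\gamma(c)]$. The main subtlety to handle carefully is to keep track of the sign flips and of the direction of the limits (left vs.\ right), in particular to confirm that $s_-$ (and not $s_+$) appears on the right-hand side of \eqref{e:222}; this is where the orientation reversal $\eta=-\xi$ does its work and is the only nontrivial bookkeeping in the argument.
\end{proofof}
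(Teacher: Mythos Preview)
Your proposal is correct and follows essentially the same route as the paper: both reduce to Proposition~\ref{p:swf to zero} via the reflection $\phi\mapsto 1-\phi(-\cdot)$ with the transformed coefficients $\bar D=-D(1-\cdot)$, $\bar g=g(1-\cdot)$, $\bar f=f(1)-f(1-\cdot)$, identify $c^*_{n,l}$ with the threshold of the transformed problem, and set $\gamma(c)=-\beta(c)$. Your write-up is in fact slightly more explicit than the paper's (you spell out the Dini-derivative correspondence $D^+(\tilde D\tilde g)(0)=D^-(Dg)(1)$ and the invariance of $s_-$ under the transformation), but the underlying argument is identical.
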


\begin{proof}
We refer to Figure \ref{f:f1b}, dashed lines. Take, for $\phi \in [0,1]$,
\begin{equation}
\label{e:coefficients change}
\bar{D}\left(\phi\right):=-D\left(1-\phi\right), \ \bar{g}\left(\phi\right):=g\left(1-\phi\right), \ \bar{f}\left(\phi\right):=f(1)-f\left(1-\phi\right).
\end{equation}
Set $\bar{\alpha}:=1-\alpha$. Then $\bar{D}, \bar{g}, \bar{f}$ satisfy ${\rm (D_{pn})}$ with $\bar\alpha$ replacing $\alpha$, (g), (f) and $\eqref{Dg}_1$. Set $\bar{h}=\dot{\bar{f}}$. We apply Proposition \ref{p:swf to zero} to deduce that there exists a real value $c^*$ satisfying
\begin{equation*}
\max\left\{ \bar{h}(0)+2\sqrt{D_+\bar{D}\bar{g}(0)}, \sup_{(0,\bar\alpha]} \delta\left(\bar{f}, 0\right) \right\} \le
 c^* \le \sup_{(0,\bar\alpha]} \delta\left(\bar{f}, 0\right) + 2\sqrt{\sup_{(0,\bar\alpha]} \delta\left(Dg,0\right)}
\end{equation*}
such that there exist strict semi-wavefronts connecting $0$ to $\bar\alpha$, with speed $c$, if and only if $c\ge c^*$. In the formula above, we recall that $\delta \left(\cdot,\cdot\right)$ is defined in \eqref{e:DQ}. Direct manipulations show that the above chain of inequalities coincides with \eqref{e:c*nl}, in virtue of \eqref{e:coefficients change}. Fix $c\ge c^*$ and let $\phi_{\bar\alpha,0}$ be the profile of a semi-wavefront connecting $\bar\alpha$ to $0$, having speed $c$, given by Proposition \ref{p:swf to zero}. Moreover, assume that $\phi_{\bar\alpha,0}$ is maximally defined in $(\xi_{\bar\alpha},+\infty)$. Define then $\phi_{1,\alpha}:(-\infty,-\xi_{\bar\alpha})\to \R$ by
\begin{equation}
\label{e:swf change}
\phi_{1,\alpha}(\xi):=1-\phi_{\bar\alpha, 0} \left(- \xi\right) \ \mbox{ for } \ \xi <\xi_\alpha:=-\xi_{\bar\alpha}.
\end{equation}
Notice that, because $\phi_{\bar\alpha,0}$ connects $\bar\alpha$ to $0$, then \eqref{e:swf change} implies
\[
\lim_{\xi \to -\infty}\phi_{1,\alpha}(\xi)=1 \ \mbox{ and } \ \lim_{\xi\to \xi_{\alpha}^-}\phi_{1,\alpha}(\xi)=\alpha,
\]
thus $\phi_{1,\alpha}$ connects $1$ to $\alpha$. Also, from \eqref{e:swf change} and $\bar{D}\left(\phi_{\bar\alpha,0}\right)\phi_{\bar\alpha,0}\rq{} \in L^1\left(\xi_{\bar\alpha},\infty\right)$, we have $D\left(\phi_{1,\alpha}\right)\phi_{1,\alpha}\rq{}\in L^1\left(-\infty,\xi_{\alpha}\right)$. Since $\phi_{\bar\alpha,0}$ satisfies \eqref{e:ODE}, with $\bar{D}, \bar{g}$, $\bar{h}$, in $(\xi_{\bar\alpha},+\infty)$ for $c\ge c^*$, then direct computations show that $\phi_{1,\alpha}$ satisfies \eqref{e:ODE} in $(-\infty, \xi_{\alpha})$ with the same $c$. Solutions are always meant in the sense of Definition \ref{d:tws}.
\par
Viceversa, let $\phi$ be a profile of a strict semi-wavefront of \eqref{e:E}, connecting $1$ to $\alpha$ associated to some $c\in \R$, defined maximally in $(-\infty, 0)$. By setting
\begin{equation}
\label{e:swf change 2}
\phi_{\bar\alpha,0}(\tau):=1-\phi\left(- \tau\right) \ \mbox{ for } \ \tau > 0,
\end{equation}
we obtain the profile of a strict semi-wavefront, from $\bar\alpha$ to $0$, of Equation \eqref{e:ODE} (with $\bar{D}$, $\bar{g}$ and $\bar{f}$ as in \eqref{e:coefficients change}) associated to the speed $c$. By applying Proposition \ref{p:swf to zero} we deduce that $c\ge c^*$. The first part of the statement is hence proved, with $c^*_{n,l}=c^*$ defined above.

Finally, \eqref{strict monotonicity 2}--\eqref{e:222} follow by \eqref{e:swf change} and Proposition \ref{p:swf to zero}, for $\gamma(c):=-\beta(c)$.
\end{proof}

\begin{remark}
\label{rem:xi1}
{
\rm
 According to \eqref{e:swf change} and Remark \ref{rem:xi0}, every semi-wavefront $\phi$ given in Proposition \ref{prop:swf from one} satisfies $\left(D\left(\phi\right)\phi\rq{}\right)\left(\xi_1^+\right)=0$, where $\xi_1:=\inf\left\{ \xi< \xi_\alpha: \phi(\xi)<1\right\} \in [-\infty, \xi_\alpha)$.
}
\end{remark}

\begin{proofof}{Theorem \ref{thm:wf}}
Let $c^*_{p,r}$ and $c^*_{n,l}$ be the thresholds given in Proposition \ref{p:swf to zero} and \ref{prop:swf from one}, respectively, and define $c^*_{pn}$ as in \eqref{e:c*pn}.

First, we take $c\ge c^*_{pn}$ and prove that there is a wavefront to equation \eqref{e:E} that satisfies \eqref{e:infty} and has speed $c$. Associated to such a value of $c$, from Proposition \ref{p:swf to zero}, there exists a strict semi-wavefront $\phi_0$ (according to the notation introduced in the statement of Proposition \ref{p:swf to zero}) to $0$ with wave speed $c$, connecting $\alpha$ to $0$, such that
\begin{equation}
\label{e:xialpha phi}
\left(D\left(\phi_0\right)\phi_0\rq{}\right)(\xi_\alpha^+)=0,
\end{equation}
see \eqref{e:ell}. In \eqref{e:xialpha phi}, the value $\xi_\alpha$, which is finite because we are considering a {\em strict} semi-wavefront, is such that $\phi_0$ is maximally defined in $(\xi_\alpha, +\infty)$. Analogously, from Proposition \ref{prop:swf from one}, we have that there exists a semi-wavefront from $1$ with wave speed $c$, which connects $1$ to $\alpha$ and such that its profile $\psi_0$ realizes
\begin{equation}
\label{e:xialpha psi}
\left(D\left(\psi_0\right)\psi_0\rq{}\right)(\xi_\alpha^-)=0,
\end{equation}
see \eqref{e:ell2}. Here, we assumed that $\psi_0$ is maximally defined in $(-\infty,\xi_\alpha)$; this is always possible unless of shifting $\psi_0$. We refer to Remark \ref{rem:swf choice} for the reasons of the choices of $\phi_0$ and $\psi_0$.

Let $\xi_1, \xi_0 \in \R$ be such that
\begin{eqnarray*}
&\xi_0:= \sup\left\{\xi>\xi_\alpha: \phi_0(\xi)>0\right\}\in (\xi_\alpha, +\infty],\\
&\xi_1:=\inf\left\{\xi < \xi_\alpha: \psi_0(\xi) < 1\right\} \in [-\infty, \xi_\alpha),
\end{eqnarray*}
see Figure \ref{f:fphi12}. Define then the function  $\phi=\phi(\xi)$ by
\begin{equation}
\label{e:phi global}
\phi(\xi)=
\begin{cases}
\phi_0(\xi), \ \xi \ge \xi_\alpha,\\
\psi_0(\xi), \  \xi < \xi_\alpha.
\end{cases}
\end{equation}
It is clear that $\phi$ is well-defined and continuous in $(-\infty, +\infty)$; moreover, $\phi$ is non-increasing and  connects $1$ to $0$. Since both $\phi_0$ and $\psi_0$ satisfy \eqref{e:ODE} in their domains, then $\phi$ satisfies \eqref{e:ODE} pointwise in $(-\infty, \xi_1)\cup (\xi_1,\xi_\alpha)\cup (\xi_\alpha, \xi_0)\cup(\xi_0, +\infty)$.

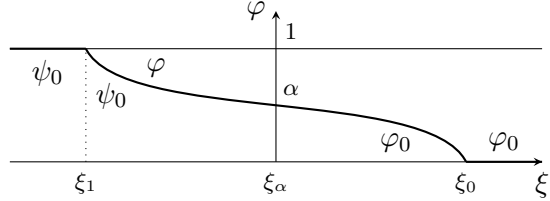
\begin{figure}[htb]
\begin{center}

\begin{tikzpicture}[>=stealth, scale=0.5]

\draw[->] (0,0) --  (7,0) node[below]{$\xi$} coordinate (x axis);
\draw (0,0) --  (-7,0);
\draw (-7,3) --  (7,3);
\draw[->] (0,0) node[below]{\footnotesize{$\xi_\alpha$}}-- (0,4) node[left]{$\phi$} coordinate (y axis);
%
\draw[thick] (-5,3) .. controls (-4,1) and (4,2)  .. node[very near start, below]{$\psi_0$} node[near end, below]{$\phi_0$} node[near start, above]{$\phi$} (5,0) node[below]{\footnotesize{$\xi_0$}};
\draw[thick] (-7,3) -- node[midway,below]{$\psi_0$} (-5,3);
%
\draw(0,1.5) node[right=0.2cm,above=-0cm]{\footnotesize{$\alpha$}};
\draw(0,3) node[right=0.2cm,above=-0.cm]{\footnotesize{$1$}};
\draw[dotted] (-5,0) node[below]{\footnotesize{$\xi_1$}}--  (-5,3);
\draw[thick] (5,0) -- node[midway, above]{$\phi_0$} (7,0);
\end{tikzpicture}

\end{center}
\caption{\label{f:fphi12}{Construction of the profile $\phi$ in the case $\xi_0,\xi_1\in\R$.}}
\end{figure}

Formulas \eqref{e:xialpha phi}, \eqref{e:xialpha psi} imply that $D(\phi)\phi\rq{}$ is continuously extended to $\xi=\xi_\alpha$; we have
\begin{equation}
\label{e:xi1 xi0}
\left(D\left(\psi_0\right)\psi_0\rq{}\right)(\xi_1^+)=0 \ \mbox{ and } \ \left(D\left(\phi_0\right)\phi_0\rq{}\right)(\xi_0^-)=0.
\end{equation}
Formula $\eqref{e:xi1 xi0}_2$ follows from Remark \ref{rem:xi0} applied to $\phi_0$ and $\eqref{e:xi1 xi0}_1$ follows from Remark \ref{rem:xi1} applied to $\psi_0$. Hence, we showed that $D(\phi)\phi\rq{}\in L^1_{\rm loc}(-\infty,+\infty)$. It remains to show that $\phi$ is a solution of \eqref{e:ODE} in $(-\infty, +\infty)$, according to Definition \ref{d:tws}. To this purpose, take $\zeta \in C_0^\infty(-\infty,\infty)$. Since $\phi$ is a distributional solution of \eqref{e:ODE} in both $(-\infty, \xi_\alpha)$ and $(\xi_\alpha, + \infty)$, then we can reduce to test \eqref{e:def-tw} when $\supp \zeta \subseteq [\xi_\alpha - \delta, \xi_\alpha+\delta] \subset (\xi_1, \xi_0)$, for some $\delta>0$. Hence, we need to test whether
 \begin{equation}
 \label{e:integral0}
 \int_{\xi_\alpha-\delta}^{\xi_\alpha+\delta} \left(D\left(\phi\right)\phi\rq{}-f\left(\phi\right)+c\phi\right)\zeta\rq{} -g\left(\phi\right)\zeta \, d\xi=0.
 \end{equation}
 Clearly, the left-hand side of \eqref{e:integral0} equals
 \begin{multline}
 \label{e:integral01}
 \int_{\xi_\alpha-\delta}^{\xi_\alpha}\left(D\left(\psi_0\right)\psi_0\rq{}-f\left(\psi_0\right)+c\psi_0\right)\zeta\rq{} -g\left(\psi_0\right)\zeta \, d\xi + \\
 \int_{\xi_\alpha}^{\xi_\alpha+\delta}\left(D\left(\phi_0\right)\phi_0\rq{}-f\left(\phi_0\right)+c\phi_0\right)\zeta\rq{} -g\left(\phi_0\right)\zeta \, d\xi.
 \end{multline}
 Let us focus on the former addend of \eqref{e:integral01}. We have:
 \begin{multline}
 \label{e:integral011}
  \int_{\xi_\alpha-\delta}^{\xi_\alpha}\left(D\left(\psi_0\right)\psi_0\rq{}-f\left(\psi_0\right)+c\psi_0\right)\zeta\rq{} -g\left(\psi_0\right)\zeta \, d\xi =\\
   \lim_{\delta>\eps \to 0^+} \int_{\xi_\alpha-\delta}^{\xi_\alpha- \eps} \left(D\left(\psi_0\right)\psi_0\rq{}-f\left(\psi_0\right)+c\psi_0\right)\zeta\rq{} -g\left(\psi_0\right)\zeta \, d\xi=\\
   \lim_{\delta>\eps\to 0^+}
    \left(\left(D\left(\psi_0\right)\psi_0\rq{}\right)\left(\xi_\alpha-\eps\right) - f\left(\psi_0\left(\xi_\alpha-\eps\right)\right)-c\psi_0\left(\xi_\alpha-\eps\right) \right)\zeta\left(\xi_\alpha-\eps\right)= \\
     \left(D\left(\psi_0\right)\psi_0\rq{}\right)\left(\xi_\alpha^-\right)\zeta\left(\xi_\alpha\right) - \left(f\left(\alpha\right)-c\alpha \right)\zeta\left(\xi_\alpha\right)=- \left(f\left(\alpha\right)-c\alpha \right)\zeta\left(\xi_\alpha\right).
 \end{multline}
Similar arguments involving the latter addend of \eqref{e:integral01} lead to
\begin{multline}
\label{e:integral021}
 \int_{\xi_\alpha}^{\xi_\alpha+\delta}\left(D\left(\phi_0\right)\phi_0\rq{}-f\left(\phi_0\right)+c\phi_0\right)\zeta\rq{} -g\left(\phi_0\right)\zeta \, d\xi = \\
-\left(D\left(\phi_0\right)\phi_0\rq{}\right)\left(\xi_\alpha^+\right)\zeta\left(\xi_\alpha\right) + \left(f(\alpha)-c\alpha\right) \zeta\left(\alpha\right)= \left(f(\alpha)-c\alpha\right) \zeta\left(\xi_\alpha\right).
\end{multline}
Putting together \eqref{e:integral011} and \eqref{e:integral021} proves \eqref{e:integral0}.

\smallskip
Conversely, we prove that if there exists a wavefront with speed $c$, whose profile $\phi$ is non-increasing and satisfies \eqref{e:ODE}-\eqref{e:infty}, then $c \ge c^*_{pn}$. Let $\xi_\alpha\in\R$ be such that $\phi(\xi_\alpha)=\alpha$. Such a $\xi_\alpha$ obviously exists since $\phi$ is continuous and satisfies \eqref{e:infty}. Furthermore, we have
\begin{equation}
\label{e:item ii}
\left\{\phi = \alpha\right\}=\left\{\xi_\alpha\right\}.
\end{equation}
Indeed, otherwise there exists an open set $J\subset \left\{\phi = \alpha\right\}$ where $\phi$ is constantly equal to $\alpha$. Thus, in $J$, equation \eqref{e:ODE} for $\phi$ reduces to $g(\alpha)=0$, which is clearly forbidden by (g). Then, we proved \eqref{e:item ii}.
\par
The function $\phi_{\alpha,0}:(\xi_\alpha, +\infty)\to (0,\alpha)$, defined by $\phi_{\alpha,0}=\phi$, is a strict semi-wavefront to $0$ with speed $c$, connecting $\alpha$ to $0$, and the function $\phi_{1,\alpha}:(-\infty, \xi_\alpha)\to (\alpha,1)$, defined by $\phi_{1,\alpha}=\phi$, is a strict semi-wavefront from $1$, with speed $c$, connecting $1$ to $\alpha$. From Propositions \ref{p:swf to zero} and \ref{prop:swf from one}, both $c\ge c^*_{p,r}$ and $c\ge c^*_{n,l}$ must occur. Hence, $c \ge c^*_{pn}$, and the first part of Theorem \ref{thm:wf} is proved.

\smallskip
To conclude the proof, we apply \cite[Corollary 9.4]{BCM1} to $\phi_{\alpha,0}$ (defined just above) and to $\phi_{\bar\alpha,0}$ (defined by \eqref{e:swf change 2}).
\end{proofof}

\begin{remark}[Estimates for $c^*_{pn}$]
\label{rem:c*pn}
{
\rm
We now explicitly provide estimates for the threshold $c^*_{pn}$ in \eqref{e:c*pn}. Obviously, $c^*_{pn}$ inherits the bounds, from above and below, for both $c^*_{p,r}$ and $c^*_{n,l}$. Such estimates are contained in Propositions \ref{p:swf to zero} and \ref{prop:swf from one}. We hence have:
\begin{align*}
c^*_{pn} &\ge \max\left\{ \sup_{(0,\alpha]} \delta(f,0), \sup_{[\alpha,1)}\delta(f,1), h(0)+2\sqrt{D_{+}Dg(0)}, h(1)+2\sqrt{D_{-}Dg(1)}\right\},
\\
c^*_{pn} &\le \max\left\{\sup_{(0,\alpha]}\delta(f,0) + 2 \sqrt{\sup_{(0,\alpha]}\delta(Dg,0)}, \sup_{[\alpha,1)}\delta(f,1) + 2\sqrt{\sup_{[\alpha,1)}\delta(Dg,1)}\right\}.
\end{align*}
If $f$ is identically zero, such estimates were given in \cite[formula (14)]{Maini-Malaguti-Marcelli-Matucci2006}.
}
\end{remark}

\begin{remark}
\label{rem:swf choice}
{
\rm
It is worth emphasizing that, given $c$ large enough, among the profiles in the families
\[
\{\phi_\ell:\ell \in [\beta(c),0]\}\quad \hbox{ and } \quad \{\psi_s: s \in[0,\gamma(c)]\},
\]
given by Propositions \ref{p:swf to zero} and \ref{prop:swf from one} respectively, we can only benefit from $\phi_0$ and $\psi_0$ to construct a wavefront as in Theorem \ref{thm:wf}. In particular, we can take advantage only of those profiles whose associated functions $z$ (solving \eqref{e:zIntro}) vanish at $\alpha^\pm$. In all the other cases, the pasting of a profile $\phi_\ell$ with a profile $\psi_s$ does not provide a solution (according to the distributional sense of Definition \ref{d:tws}) in a neighborhood of the matching point. Indeed, under these assumptions, \eqref{e:integral011} and \eqref{e:integral021} read respectively as
\begin{align}
\label{e:integral3_2}
\int_{\xi_\alpha-\delta}^{\xi_\alpha}\left(D\left(\psi_s\right)\psi_s\rq{}-f(\psi_s)+c\psi_s\right)\zeta\rq{}-g(\psi_s)\zeta\,d\xi & =
 \left[ s  + c\alpha -f(\alpha)\right]\zeta(\xi_\alpha),
\\
\label{e:integral4_2}
\int_{\xi_\alpha}^{\xi_\alpha+\delta}\left(D\left(\phi_\ell\right)\phi_\ell\rq{}-f(\phi_\ell)+c\phi_\ell\right)\zeta\rq{}-g(\phi_\ell)\zeta\,d\xi  &= \left[-\ell -c\alpha + f(\alpha)\right]\zeta(\xi_\alpha).
\end{align}
Thus, in place of \eqref{e:integral0}, putting together \eqref{e:integral3_2} and \eqref{e:integral4_2} gives
\begin{equation}
\label{e:integral5_2}
\int_{\xi_\alpha-\delta}^{\xi_\alpha+\delta} \left(D\left(\phi\right)\phi\rq{}-f(\phi)+c\phi\right)\zeta\rq{}-g(\phi)\zeta\,d\xi = \left(s-\ell\right) \zeta(\xi_\alpha),
\end{equation}
which vanishes for each arbitrary test function $\zeta$ if and only if $s=\ell=0$.

In the first instance, this seems to be suggested by the fact that if, formally,
\[
z(\alpha) = D\left(\phi(\xi_\alpha)\right)\phi'(\xi_\alpha^-) = D(\alpha)\phi'(\xi_\alpha^-)<0,
\]
then necessarily $\phi\rq{}(\xi_\alpha^-)=-\infty$, because $D(\alpha)=0$; the same remark holds if $w(\alpha)>0$. Nonetheless, the failure of the pasting is not due to a possibly infinite derivative of the profile at the matching point; indeed, Theorem \ref{thm:wf} {\em (ii)} shows that the wavefront $\phi$ {\em can} well have infinite slope at $\xi_\alpha$, see profile $\phi^2$ in Figure \ref{f:f2}.
}
\end{remark}

\section{Wavefronts with negative to positive diffusivities}\label{sec:dnp}
\setcounter{equation}{0}
 In this section we assume condition {$(\rm D_{np})$} and prove Theorem \ref{thm:wf2}.

The existence and regularity of semi-wavefronts from $1$, connecting $1$ to $\beta$, was obtained essentially in \cite[Theorem 2.7]{CM-DPDE} in the case $D(1)>0$ and in \cite[Theorems 2.3 and 2.5]{CdRM} when $D(1)=0$.
 We collect these results in the next proposition. The sharper estimate \eqref{e:c*pl}  for the threshold $c^*_{p,l}$
 comes from \cite[Corollary 5.4 and Remark 5.5]{BCM1}. The symbols $s_{\pm}(\alpha,c)$ are given by \eqref{e:s}. We refer to Figure \ref{f:f1bb}, solid lines.

\begin{proposition}
\label{prop:cm-dpde}
Assume {\rm (f)}, {\rm (g)}, {$\rm (D_{np})$}. Then, for every $c\in \R$, Equation \eqref{e:E} has a (unique up to shifts) strict semi-wavefront solution from $1$, connecting $1$ to $\beta$, with speed $c$ and profile $\phi$ defined in its maximal-existence interval $(-\infty, \xi_\beta)$, for some $\xi_\beta\in \R$. It holds that $\phi\rq{} <0$ if $\beta<\phi <1$.
There exists $c^*_{p,l}$ satisfying \eqref{e:c*pl} such that \eqref{e:threshold cm-dpde condition} holds true and we have
\begin{equation}
\label{e:phiprime betaminus}
\phi\rq{}(\xi_\beta^-) =
\begin{cases}
\frac{g(\beta)}{s_-\left(\beta, c\right)} \ &\mbox{ if } \ c> c^*_{p,l},\\[2mm]
\frac{g(\beta)}{s_+(\beta,c^*_{p,l})} \ &\mbox{ if }\ c= c^*_{p,l} \ \mbox{ and } \ \dot{D}(\beta)>0, \\
-\infty \ &\mbox{ if }\ c=c^*_{p,l} \ \mbox{ and } \ \dot{D}(\beta)=0 \ \mbox{ or } \ c<c^*_{p,l}.
\end{cases}
\end{equation}

Moreover, the following results hold.
\begin{enumerate}[(i)]
\item If $D(1)>0$, then $\phi$ is classical.

\item If $D(1)=0$ and either $c>h(1)$ or $c=h(1)$ and $\dot{D}(1)<0$, then $\phi$ is classical.

 \item If $D(1)=0$ and $c<h(1)$, then $\phi$ is sharp at $1$ (reached at some $\xi_1<\xi_\beta$) with
\begin{equation*}
 \phi\rq{}(\xi_1^+)=
\left\{
\begin{array}{ll}
\frac{h(1)-c}{\dot{D}(1)} <0 \ &\mbox{ if } \ \dot{D}(1)<0,\\[2mm]
-\infty \ &\mbox{ if } \ \dot{D}(1)=0.
\end{array}
\right.
\end{equation*}
\end{enumerate}
\end{proposition}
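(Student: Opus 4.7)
The plan is to reduce the second-order boundary-value problem for the profile on the subinterval where $\phi$ ranges over $(\beta,1)$ to a first-order singular problem for $z(\varphi):=D(\varphi)\phi'$ viewed as a function of $\varphi$, and then to quote the existing literature. On $(\beta,1)$ we have $D>0$ and $g>0$ with $g(\beta)>0$, so $\beta$ is \emph{not} an equilibrium of $g$; the reduced equation reads
\[
\dot z(\varphi)=h(\varphi)-c-\frac{D(\varphi)g(\varphi)}{z(\varphi)},\qquad z<0\text{ on }(\beta,1),\qquad z(1)=0,
\]
and one also wants to pin down $z(\beta^+)$. The first step is to invoke \cite[Theorem 2.7]{CM-DPDE} when $D(1)>0$, and \cite[Theorems 2.3 and 2.5]{CdRM} when $D(1)=0$; together they yield the existence and uniqueness (up to a space shift) of a negative solution $z$ on $(\beta,1)$ for \emph{every} $c\in\R$, equivalently of a strictly decreasing profile $\phi$ on $(-\infty,\xi_\beta)$, and they directly supply the regularity trichotomy at $\phi=1$ stated in items (i)--(iii).

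Next I would define $c^*_{p,l}$ as in \cite[Theorem 2.6]{CM-DPDE}, namely as the infimum of those $c$ for which $z$ extends continuously up to $\beta$ with $z(\beta)=0$; for $c<c^*_{p,l}$ one has instead $z(\beta)=\ell<0$, which is exactly \eqref{e:threshold cm-dpde condition}. The sharper bound \eqref{e:c*pl} is obtained by the affine shift $\varphi\mapsto\varphi-\beta$ that translates the left endpoint to $0$, so that the refined comparison estimates of \cite[Corollary 5.4 and Remark 5.5]{BCM1} apply verbatim: the upper bound uses the Marcelli--Papalini mean-value improvement, and the lower bound uses the linear sub-solution $z_-(\varphi)=s_-(\beta,c)(\varphi-\beta)$ built from the indicial roots described below.

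To obtain \eqref{e:phiprime betaminus}, observe that $\phi'(\xi_\beta^-)=\lim_{\varphi\to\beta^+}z(\varphi)/D(\varphi)$ and that, under $(\rm D_{np})$, $D(\varphi)\sim\dot D(\beta)(\varphi-\beta)$. When $c\ge c^*_{p,l}$, and hence $z(\beta)=0$, substituting the ansatz $z(\varphi)\sim s(\varphi-\beta)$ into the ODE and passing to the limit $\varphi\to\beta^+$ produces the indicial equation $s^2-(h(\beta)-c)s+\dot D(\beta)g(\beta)=0$, whose roots are precisely $s_\pm(\beta,c)$. Using the Vieta identity $s_+s_-=\dot D(\beta)g(\beta)$, the quotient $s/\dot D(\beta)$ rewrites as $g(\beta)/s'$ with $s'$ the other root. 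A local phase-plane analysis at $\varphi=\beta$ selects the attracting slope $s_-$ for the generic orbit (i.e.\ $c>c^*_{p,l}$), and the repelling slope $s_+$ for the extremal orbit at $c=c^*_{p,l}$ (when $\dot D(\beta)>0$). When $\dot D(\beta)=0$ and $c=c^*_{p,l}$, the roots coincide with $h(\beta)-c\le 0$ and $z$ vanishes more slowly than $D$, giving $\phi'(\xi_\beta^-)=-\infty$; when $c<c^*_{p,l}$ the quotient $\ell/D(\beta^+)$ is $-\infty$ automatically.

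The main obstacle I anticipate is the careful identification of \emph{which} root $s_\pm$ governs the limit of $z/D$ in each regime. The existence/uniqueness statements and items (i)--(iii) reduce to direct citations, but the phase-plane argument selecting $s_-$ for $c>c^*_{p,l}$ and $s_+$ precisely at the threshold $c=c^*_{p,l}$ (under $\dot D(\beta)>0$) requires adapting the saddle-node analysis of \cite[Section 5]{BCM1} to the present setting, where the endpoint $\beta$ is not an equilibrium of $g$ but rather a singular point inherited from the vanishing of $D$. This is the only step that genuinely goes beyond a quotation of pre-existing results.
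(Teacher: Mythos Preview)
Your approach coincides with the paper's: Proposition~\ref{prop:cm-dpde} is not proved from scratch there but is presented as a collection of results from the literature, with exactly the citations you list (\cite[Theorem 2.7]{CM-DPDE} for $D(1)>0$, \cite[Theorems 2.3 and 2.5]{CdRM} for $D(1)=0$, \cite[Theorem 2.6]{CM-DPDE} for the threshold, and \cite[Corollary 5.4 and Remark 5.5]{BCM1} for the sharpened bound \eqref{e:c*pl}). So the overall strategy is correct and matches the paper.

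There is, however, a concrete error in your indicial-root analysis, and it is exactly the point you flagged as delicate. You write that the generic orbit ($c>c^*_{p,l}$) selects the slope $s_-$ and the threshold orbit selects $s_+$; this is backwards. Via Vieta, if $z(\varphi)\sim s(\varphi-\beta)$ then $\phi'(\xi_\beta^-)=s/\dot D(\beta)=g(\beta)/s'$ with $s'$ the \emph{other} root. To recover $\phi'(\xi_\beta^-)=g(\beta)/s_-$ for $c>c^*_{p,l}$, as stated in \eqref{e:phiprime betaminus}, the solution $z$ must approach $\beta$ with slope $s_+$ (the weak, i.e.\ less negative, eigenvalue), and the extremal orbit at $c=c^*_{p,l}$ must have slope $s_-$. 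This is also the dynamically natural picture: at a stable node the generic orbits enter tangent to the slow (weak) eigendirection, and only the distinguished orbit enters along the fast one. Once you swap the labels in your last paragraph, the argument goes through.
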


\begin{figure}[htb]
\begin{center}

\begin{tikzpicture}[>=stealth, scale=0.6]
\draw[->] (0,0)  --  (6,0) node[below]{$\rho$} coordinate (x axis);
\draw[->] (0,0) -- (0,4) coordinate (y axis);
\draw (0,0) -- (0,-1.2);
\draw[thick,dashed] (0,-1) .. controls (1,-3) and (2,-3) .. (3.2,0) node[right=5, below=0]{\footnotesize{$\beta$}};
\draw[thick] (3.2,0) .. controls (4,2) and (4.5,2) .. (5,1) node[very near start, right]{\footnotesize{$D$}};
\draw[dotted] (5,1) -- (5,0) node[below]{\footnotesize{$1$}};
\draw[thick,dashed] (0,0) .. controls (1,2) and (1.6,3) .. (3.2,2.7); 
\draw[thick] (3.2,2.7) .. controls (3.8,2.55) and (4.3,2) .. (5,0)  node[near start, above]{\footnotesize{$g$}}; 
\draw[dotted] (3.2,0) -- (3.2,2.7);

\begin{scope}[xshift=8cm]
\draw[->] (0,0) --  (8,0) node[below]{$\xi$} coordinate (x axis);
\draw[->] (4,0) -- (4,4) node[right]{$\phi$} coordinate (y axis);
\draw (0,2) -- (8,2);
\draw (0,3.6) -- (8,3.6);
\draw[thick] (0,3.4) .. controls (2,3.4) and (4,3) .. (5,2) ;
\draw[thick, dashed] (5,2) .. controls (6,1.4) and (6,0.2) .. (8,0.2) ;
\draw[dotted] (5,0)  node[below]{$\xi_\beta$} -- (5,2);
\draw(4,2.3) node[left=5,below=3]{\footnotesize{$\beta$}};
\draw(3,4.1) node[left]{\footnotesize{$1$}};
\end{scope}
\end{tikzpicture}
\end{center}
\caption{\label{f:f1bb}{$D$ satisfies $(\rm D_{np})$. Left: the plots of $D$, $g$ in $[0,\beta]$ (dashed lines) and $[\beta,1]$ (solid lines); right, a corresponding profile.}}
\end{figure}
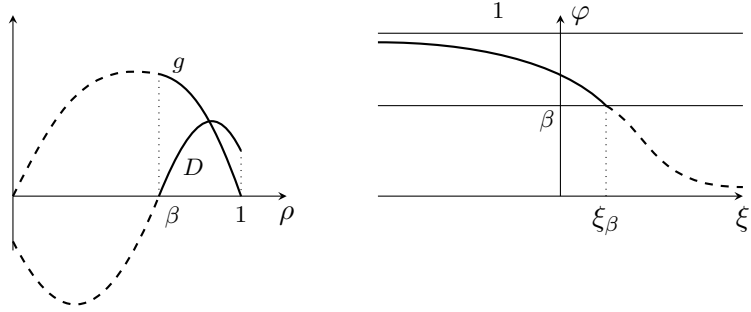
We discuss the existence of semi-wavefronts to $0$ in the next proposition.
\begin{proposition}
\label{prop:D1minus}
Assume {\rm (f)}, {\rm (g)}, {$\rm (D_{np})$}. Then, for every $c\in \R$ Equation \eqref{e:E} has a (unique up to space shifts) strict semi-wavefront solution to $0$, connecting $\beta$ to $0$, with speed $c$ and profile $\phi$ defined in its maximal-existence interval $(\xi_\beta, +\infty)$, for some $\xi_\beta \in \R$. It holds that $\phi\rq{} <0$ if $0<\phi <\beta$.
In addition, there exists $c^*_{n,r}\in\R$ satisfying \eqref{e:threshold cm-dpde condition2} and we have
\begin{equation}
\label{e:phiprime betaplus}
\phi\rq{}(\xi_\beta^+) =
\begin{cases}
\frac{g(\beta)}{s_-(\beta, c)} \ &\mbox{ if } \ c> c^*_{n,r},\\[2mm]
\frac{g(\beta)}{s_+(\beta, c_{n,r}^*)} \ &\mbox{ if }\ c=c^*_{n,r} \ \mbox{ and } \ \dot{D}(\beta)>0,\\
-\infty \ &\mbox{ if }\ c<c^*_{n,r} \ \mbox{ or } \ c=c^*_{n,r} \ \mbox{ and } \ \dot{D}(\beta)=0.
\end{cases}
\end{equation}

Moreover, the following results hold.
\begin{enumerate}[(i)]
\item If $D(0)<0$, then $\phi$ is classical.

\item If $D(0)=0$ and either $c>h(0)$ or $c=h(0)$ and $\dot{D}(0)<0$ then $\phi$ is classical.

 \item If $D(0)=0$ and $c<h(0)$ then $\phi$ is sharp at $0$ (reached at some $\xi_0>\xi_\beta$) with
\begin{equation*}
 \phi\rq{}(\xi_0^-)=
\left\{
\begin{array}{ll}
\frac{h(0)-c}{\dot{D}(0)} <0 \ &\mbox{ if } \ \dot{D}(0)<0,\\[2mm]
-\infty \ &\mbox{ if } \ \dot{D}(0)=0.
\end{array}
\right.
\end{equation*}
\end{enumerate}
\end{proposition}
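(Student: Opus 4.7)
\begin{proofof}{Proposition \ref{prop:D1minus} (sketch)}
My plan is to reduce Proposition \ref{prop:D1minus} to Proposition \ref{prop:cm-dpde} by means of the very same reflection used in the proof of Proposition \ref{prop:swf from one}. Namely, for $\bar\phi \in [0,1]$ I would set
\[
\bar D(\bar\phi):=-D(1-\bar\phi),\qquad \bar g(\bar\phi):=g(1-\bar\phi),\qquad \bar f(\bar\phi):=f(1)-f(1-\bar\phi),
\]
as in \eqref{e:coefficients change}, and let $\bar h:=\dot{\bar f}$. A direct sign check shows that $\bar D$ still satisfies $\rm (D_{np})$, now with $\bar\beta:=1-\beta$ in place of $\beta$, while $\bar g$ and $\bar f$ satisfy (g) and (f). Thus Proposition \ref{prop:cm-dpde} applies to $(\bar D,\bar g,\bar f)$, producing (for every speed $c$) a unique up-to-shifts strict semi-wavefront $\bar\phi$ from $1$ connecting $1$ to $\bar\beta$, a threshold $\bar c^*_{p,l}$ satisfying \eqref{e:c*pl}, the dichotomy \eqref{e:threshold cm-dpde condition}, the formula \eqref{e:phiprime betaminus} for $\bar\phi'(\bar\xi_{\bar\beta}^-)$, and the regularity items (i)--(iii) at $\bar\phi=1$.

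The first step in the proof is to establish the bijection: if $\phi$ is a strict semi-wavefront to $0$ for \eqref{e:E} connecting $\beta$ to $0$ with speed $c$, maximally defined on $(\xi_\beta,+\infty)$, then
\[
\bar\phi(\bar\xi):=1-\phi(-\bar\xi),\qquad \bar\xi\in(-\infty,-\xi_\beta),
\]
is a strict semi-wavefront from $1$ connecting $1$ to $\bar\beta$ for the transformed equation, with the \emph{same} speed $c$ (both ODE and convective nonlinearity are preserved under the joint action of $\xi\mapsto-\xi$ and $\phi\mapsto 1-\phi$); and conversely. The verification is literally the one carried out in \eqref{e:swf change}--\eqref{e:swf change 2}, so I would only need to check that no hypothesis used there requires the further regularity \eqref{Dg}, which is here absent.

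Setting $c^*_{n,r}:=\bar c^*_{p,l}$, the remainder is bookkeeping. The identities
\[
\delta(\bar f,\bar\beta)(\bar\phi)=\delta(f,\beta)(1-\bar\phi),\quad \bar h(\bar\beta)=h(\beta),\quad \dot{\bar D}(\bar\beta)=\dot D(\beta),\quad \bar g(\bar\beta)=g(\beta),
\]
together with the substitution $t=1-s$ in the weighted integral on the right-hand side of \eqref{e:c*pl}, transform the bound \eqref{e:c*pl} for $\bar c^*_{p,l}$ into the claimed bound \eqref{e:c*nr} for $c^*_{n,r}$. Since $\bar\phi'(\bar\xi)=\phi'(-\bar\xi)$ and $s_\pm(\beta,c)=\bar s_\pm(\bar\beta,c)$, the condition \eqref{e:threshold cm-dpde condition} becomes \eqref{e:threshold cm-dpde condition2}, and \eqref{e:phiprime betaminus} becomes \eqref{e:phiprime betaplus}. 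Finally, items (i)--(iii) of Proposition \ref{prop:D1minus} at $\phi=0$ follow from items (i)--(iii) of Proposition \ref{prop:cm-dpde} at $\bar\phi=1$ via $\bar D(1)=-D(0)$, $\bar h(1)=h(0)$, $\dot{\bar D}(1)=\dot D(0)$, together with $\phi'(\xi_0^-)=\bar\phi'(\xi_1^+)$ for $\xi_1=-\xi_0$.

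The only delicate point is tracking signs in the integral substitution (the factors $\beta-t$ and $s-\bar\beta$ have switched denominators), but after $t=1-s$ the change of variables is routine. No substantive analytical obstacle is expected: the whole argument is a reflection of the already-proved Proposition \ref{prop:cm-dpde}.
\end{proofof}
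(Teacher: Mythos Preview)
Your proposal is correct and follows essentially the same route as the paper: both apply the reflection \eqref{e:coefficients change} to recast the semi-wavefront to $0$ for $(D,g,f)$ as a semi-wavefront from $1$ for $(\bar D,\bar g,\bar f)$, observe that $\bar D$ again satisfies $(\rm D_{np})$ with $\bar\beta=1-\beta$, invoke Proposition~\ref{prop:cm-dpde}, and then translate back the threshold, the derivative formula at $\xi_\beta$, and items (i)--(iii) via the identities $\bar h(\bar\beta)=h(\beta)$, $\dot{\bar D}(\bar\beta)=\dot D(\beta)$, $\bar h(1)=h(0)$, $\dot{\bar D}(1)=\dot D(0)$. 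Your explicit remark that the absence of \eqref{Dg} is harmless here (since Proposition~\ref{prop:cm-dpde} does not require it) is a useful observation that the paper leaves implicit.
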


\begin{proof}
We refer to Figure \ref{f:f1bb}, dashed lines. Let $\bar{D}$, $\bar{g}$ and $\bar{f}$ be defined by \eqref{e:coefficients change}. We already observed in the proof of Proposition \ref{prop:swf from one} that $\bar{g}$ and $\bar{f}$ still satisfy (g) and (f). In this case, instead, $\bar{D}$ satisfies ${\rm (D_{np})}$ with $\bar\beta:=1-\beta$ replacing $\beta$. Hence, Proposition \ref{prop:cm-dpde} applied to $\bar{D}$, $\bar{g}$ and $\bar{f}$ informs us that strict semi-wavefronts with speed $c\in\R$ connecting $1$ to $\bar\beta$ exist for every $c$. Let $\phi_{1,\bar\beta}$ be a profile of one of such fronts, defined in $(-\infty, \xi_{\bar\beta})$. Set $\xi_\beta:=-\xi_{\bar\beta}$ and
\begin{equation}
\label{e:change 2}
\phi_{\beta,0}(\xi):= 1- \phi_{1,\bar\beta}(-\xi) \ \mbox{ for } \ \xi >\xi_{\beta}.
\end{equation}
With arguments analogous to those in the proof of Proposition \ref{prop:swf from one}, it turns out that $\phi_{\beta,0}$ is the profile of a desired strict semi-wavefront of \eqref{e:E}, connecting $\beta$ to $0$. Hence, the first part of the statement is proved. By Proposition \ref{prop:cm-dpde} applied to $\bar{D}$, $\bar{g}$ and $\bar{f}$, in the interval $[\bar\beta,1]$, we obtain that there exists a real value, say $\bar{c}^*$, satisfying
\begin{multline*}
 \max\left\{\sup_{(\bar\beta,1]}\delta(\bar{f},\bar\beta),\ \bar{h}(\bar\beta)+2\sqrt{\dot{\bar{D}}(\bar\beta)\bar{g}(\bar\beta)}\right\} \le
 \\
  \bar{c}^* \le \sup_{(\bar\beta,1]}\delta(\bar{f},\bar\beta) + 2\sqrt{\sup_{\phi \in(\bar\beta,1]}\frac{1}{\phi-\bar\beta}\int_{\bar\beta}^{\phi}\frac{\bar{D}(s)\bar{g}(s)}{s}\,ds}
\end{multline*}
such that
 \begin{equation*}
 \left(\bar{D}(\phi_{1,\bar\beta})\phi_{1,\bar\beta}\rq{}\right)(\xi_{\bar\beta}^-)=
 \left\{
\begin{array}{ll}
0 \ &\mbox{ if } \ c\ge \bar{c}^*,\\
\ell <0 \ &\mbox{ if }\ c< \bar{c}^*,
\end{array}
\right.
 \end{equation*}
 and
 \begin{equation}
 \label{e:eq prime}
\phi_{1,\bar\beta}\rq{}(\xi_{\bar\beta}^-) =
\left\{
\begin{array}{ll}
\frac{\bar{g}(\bar\beta)}{\bar{s}_-(\bar\beta, c)} \ &\mbox{ if } \ c>\bar{c}^*,
\\[2mm]
\frac{\bar{g}(\bar\beta)}{\bar{s}_+\left(\bar\beta, \tilde{c}^*\right)} \ &\mbox{ if }\ c=\bar{c}^* \ \mbox{ and } \ \dot{\bar{D}}(\bar\beta)>0,
\\[2mm]
-\infty \ &\mbox{ if }\ c=\bar{c}^* \ \mbox{ and } \ \dot{\bar{D}}(\bar\beta)=0 \ \mbox{ or } \ c<\bar{c}^*.
\end{array}
\right.
\end{equation}
Here, $\bar{s}_\pm$ is defined as $s_\pm$  in \eqref{e:s} but with $\bar{D}$, $\bar{g}$ and $\bar{h}$ instead of the non-subscripted ones.
Define $c^*_{n,r}=\bar{c}^*$. The former of the two previous formulas yields to \eqref{e:threshold cm-dpde condition2}. Analogously, \eqref{e:eq prime} implies \eqref{e:phiprime betaplus}. Finally, {\em (i)}--{\em (iii)} follow from the application of {\em (i)--(iii)} of Proposition \ref{prop:cm-dpde} to $\phi_{1,\bar\beta}$ and \eqref{e:change 2}-\eqref{e:coefficients change}. The proof is then concluded.
\end{proof}

\smallskip\noindent

\begin{proofof}{Theorem \ref{thm:wf2}}
For each $c\in \R$, Propositions \ref{prop:cm-dpde}, \ref{prop:D1minus} provide the existence of a strict semi-wavefront $\phi_{1,\beta}$ from $1$, connecting $1$ to $\beta$, and  a strict semi-wavefront $\phi_{\beta,0}$ to $0$, connecting $\beta$ to $0$. Let $c^*_{np}$ be as in \eqref{e:c*np} and take $c\ge c^*_{np}$. Proposition \ref{prop:cm-dpde} informs us that $\phi_{1,\beta}$ satisfies $\eqref{e:threshold cm-dpde condition2}_1$ while Proposition \ref{prop:D1minus} implies that $\phi_{\beta,0}$ realizes  $\eqref{e:threshold cm-dpde condition}_1$. In addition, from the uniqueness up to shifts of both $\phi_{1,\beta}$ and $\phi_{\beta,0}$, we can suppose that their maximal-existence intervals are one the complement of the other and that the two of them have the finite extremum at the same $\xi_\beta \in \R$. Thus, by proceeding as in the proof of Theorem \ref{thm:wf}, we conclude that gluing together $\phi_{1,\beta}$ and $\phi_{\beta,0}$ at $\xi_\beta$ produces the desired wavefront.
This concludes the {\em if} part of the statement.

Suppose now that $\phi$ is a profile of a wavefront connecting $1$ to $0$ associated to some $c\in \R$. Necessarily, there exists a unique $\xi_\beta \in\R$ such that $\phi(\xi_\beta)=\beta$. Let $\phi_+$ be defined by $\phi_+(\xi)=\phi(\xi)$, for $\xi \in (-\infty, \xi_\beta)$. Here, the index \lq\lq{}+\rq\rq{} stands for the positive sign of $D\left(\phi_+\right)$ in its domain. The function $\phi_+$ is a semi-wavefront from $1$, connecting $1$ to $\beta$. Analogously, $\phi_-$ defined by $\phi_-(\xi)=\phi(\xi)$, for $\xi \in (\xi_\beta, +\infty)$ is a semi-wavefront to $0$, connecting $\beta$ to $0$. The function $\phi$ is a solution of Equation \eqref{e:ODE}, according to Definition \ref{d:tws}. Thus, if $\zeta\in C^\infty_0\left(-\infty,+\infty\right)$ is a test function with $\supp \zeta \subseteq \left[\xi_\beta-\delta, \xi_\beta+\delta\right]$ then we have
\begin{equation}
\label{equation c*np}
\int_{\xi_\beta-\delta}^{\xi_\beta+\delta} \left(D\left(\phi\right)\phi\rq{} -f\left(\phi\right) +c\phi \right)\zeta\rq{} -g\left(\phi\right)\zeta\,d\xi=0.
\end{equation}
Observe that both $\phi_+$ and $\phi_-$ are classical solution of \eqref{e:ODE} in $(-\infty,\xi_\beta)$ and $(\xi_\beta,+\infty)$, respectively, because in these domains $\pm D\left(\phi_{\pm}\right) >0$. Since $\int_{\xi_\beta-\delta}^{\xi_\beta+\delta}=\lim_{\delta>\eps\to 0^+}\int_{\xi_\beta-\delta}^{\xi_\beta-\eps}+\int_{\xi_\beta+\eps}^{\xi_\beta+\delta}$, from \eqref{equation c*np} and integration by parts we obtain
\begin{equation}\label{e:newnew}
\left(D\left(\phi_+\right)\phi_+\rq{}\right)\left(\xi_\beta^-\right)\zeta(\xi_\beta)=0 \ \mbox{ and } \ \left(D\left(\phi_-\right)\phi_-\rq{}\right)\left(\xi_\beta^+\right)\zeta(\xi_\beta)=0.
\end{equation}
Therefore, $\eqref{e:threshold cm-dpde condition}_1$ for $\phi_+$ and $\eqref{e:threshold cm-dpde condition2}_1$ for $\phi_-$ both hold true. As a consequence, by applying Proposition \ref{prop:cm-dpde} to $\phi_+$ and Proposition \ref{prop:D1minus} to $\phi_-$ we deduce that $c\ge c^*_{p,l}$ and $c\ge c^*_{n,r}$, from which $c\ge c^*_{np}$.
\par
Finally, the values of $\phi'(\xi_\beta^\pm)$ follow from each possible combination of \eqref{e:phiprime betaminus} applied to $\phi_{+}$ and \eqref{e:phiprime betaplus} applied to $\phi_-$. Analogously, {\em Parts (i)}--{\em (iii)} follow from putting together {\em (i)}--{\em (iii)} of Proposition \ref{prop:cm-dpde} applied to $\phi_+$ and {\em (i)}--{\em (iii)} of Proposition \ref{prop:D1minus} applied to $\phi_-$.
\end{proofof}

\begin{remark}
{
\rm
In Remark \ref{rem:c*pn} we deduced estimates from above and below for $c^*_{pn}$. Bounds for $c^*_{np}$ of Theorem \ref{thm:wf2} can be obtained similarly, starting now from \eqref{e:c*pl}, \eqref{e:c*nr} and \eqref{e:c*np}.
}
\end{remark}

\begin{remark}
\label{rem:reg at beta}
{
\rm
From \eqref{e:reg at beta}, we can make explicit the {\em a priori} regularity of $\phi$ at $\xi_\beta$ depending on the value of $c$ and the relative order between the thresholds (see also Figure \ref{f:f2s2}). We have:
\begin{enumerate}[(1)]
\item if $c>c^*_{np}$, we have $\phi\rq{}({\xi_\beta}^-)=\phi\rq{}({\xi_\beta}^+)\in (-\infty,0)$;

\item if $c=c^*_{np}=c^*_{n,r}=c^*_{p,l}$, we have $\phi\rq{}({\xi_\beta}^+)=\phi\rq{}({\xi_\beta}^-)\in (-\infty,0)$ if  $\dot{D}(\beta)>0$ and $\phi'(\xi_\beta^\pm)=
-\infty$ if $ \dot{D}(\beta)=0$; 

\item if $c=c^*_{np}=c_{n,r}^*>c^*_{p,l}$, we have $\phi\rq{}({\xi_\beta}^-) > \phi\rq{}({\xi_\beta}^+)$, with $\phi\rq{}({\xi_\beta}^-) \in (-\infty,0)$ and $\phi\rq{}({\xi_\beta}^+) \in [-\infty,0)$, with $\phi\rq{}({\xi_\beta}^+) =-\infty$ if and only if $\dot D(\beta)=0$;

\item if $c=c^*_{np}=c_{p,l}^*>c^*_{n,r}$, we have $\phi\rq{}({\xi_\beta}^+) > \phi\rq{}({\xi_\beta}^-)$, with $\phi\rq{}({\xi_\beta}^+) \in (-\infty,0)$ and $\phi\rq{}({\xi_\beta}^-) \in [-\infty,0)$, with $\phi\rq{}({\xi_\beta}^-) =-\infty$ if and only if $\dot D(\beta)=0$.
\end{enumerate}
}
\end{remark}

\section{Examples}
\label{sec:examples}
\setcounter{equation}{0}
In this section we provide some examples about Theorems \ref{thm:wf} and \ref{thm:wf2}.

\begin{example}
\label{ex:alpha infinity slope}
{
\rm
This example shows that $\eqref{e:phip(xialpha)}_2$ can indeed occur when a front goes from a positive- towards a negative-diffusivity region and the term $f$ is not identically zero. Consider $D$, $g$ and $f$ defined by
\begin{equation*}
D(\phi)=\begin{cases}
\phi\left(\phi-\frac1{2}\right)^2 \ &\mbox{ if } \ \phi \in [0,1/2],\\
-\left(1-\phi\right)\left(\frac12 -\phi\right)^2 \ &\mbox{ if } \ \phi \in (1/2,1],
\end{cases}
\
g(\phi)=\begin{cases}
\phi  \ &\mbox{ if } \ \phi \in [0,1/2],\\
1-\phi  \ &\mbox{ if } \ \phi \in  (1/2,1],
\end{cases}
\end{equation*}
and
\begin{equation*}
f(\phi)=\begin{cases}
\frac{\phi^3}{3}+\frac34 \phi^2 -\frac12 \phi \ &\mbox{ if } \ \phi \in [0,1/2],\\
\frac{\phi^3}{3}-\frac74\phi^2+2\phi-\frac{5}{8} \ &\mbox{ if } \ \phi \in  (1/2,1].
\end{cases}
\end{equation*}

Note, with these choices, $D$ satisfies ${\rm (D_{pn})}$ with $\alpha=1/2$ and $\dot{D}(\alpha)=0$ while (g) and (f) hold for $g$ and $f$. Moreover, $h(\alpha)=1/2$. From direct inspection, the function $z(\phi):=\phi\left(\phi-1/2\right)$, for $0\le \phi \le 1/2$, satisfies \eqref{e:zIntro}$_1$ with $c=0<h(\alpha)$. By integrating the formal identity $z(\phi)=D(\phi)\phi\rq{}$, the profile $\phi_{\alpha,0}$ of a semi-wavefront connecting $\alpha$ to $0$, with speed $c=0$, can be determined. In particular, the following Cauchy problem:
\[
\phi\rq{}=\frac1{\phi-1/2} \ \mbox{ and } \ \phi(0)=\frac1{4},
\]
is solved by $\phi_{\alpha,0}(\xi):= \frac12 -\sqrt{\frac1{16}+2\xi}$, for any $ \xi\ge -\frac1{32}$, and $\phi>0$ for $\xi < \frac3{32}$. Since $g(0)=0$, by setting $\phi_{\alpha,0}(\xi)=0$ for $\xi \ge \frac3{32}$ we have that $\phi_{\alpha,0}:(-\frac1{32},+\infty) \to [0,1/2)$ is the desired wave profile associated to the speed $c=0$. Moreover, with the notations of Theorem \ref{thm:wf}, $\xi_\alpha=-\frac1{32}$ and $\phi_{\alpha,0}\rq{}\left(\xi_\alpha^+\right)=-\infty$.
Similar arguments lead to conclude that
\begin{equation*}
\phi(\xi):=
\begin{cases}
1 \ &\mbox{ if } \ \xi \le -5/32,\\
\frac12 +\sqrt{-1/16 -2\xi} \ &\mbox{ if } \ -5/32 < \xi < -1/32,\\
\frac12 -\sqrt{1/16 +2\xi} \ &\mbox{ if } \ -1/32 \le\xi < 3/32, \\
0 \ &\mbox{ if } \ \xi \ge 3/32,
\end{cases}
\end{equation*}
is the profile of a (sharp at both $0$ and $1$) wavefront of \eqref{e:E} with speed $c=0$, such that $\phi'\left(\xi_\alpha^\pm\right)=-\infty$. Observe, from {\em Part (ii)} of Theorem \ref{thm:wf} it must occur $c^*_{pn}=0$.
}
\end{example}

\begin{example}
\label{ex:essentially different}
{
\rm
We show that $c^*_{pn}$ in Theorem \ref{thm:wf} and $c^*_{np}$ in Theorem \ref{thm:wf2} are {\em essentially} different: opposite diffusivities do {\em not} produce necessarily $c^*_{pn} = c^*_{np}$. To this aim, define
\begin{equation*}
\ g(\phi):=\left\{
\begin{array}{ll}
\phi^2 \ &\mbox{ if } \ \phi \in \left[0,1/2\right],\\[2mm]
\left(1-\phi\right)^2 \ &\mbox{ if }\ \phi\in\left(1/2,1\right],
\end{array}
\right.
\
f(\phi):=\left\{
\begin{array}{ll}
\phi^2\left(\phi -1\right) \ &\mbox{ if } \ \phi \in \left[0,1/2\right],\\[2mm]
\phi(1-\phi)^2 -1/4 \ &\mbox{ if }\ \phi\in\left(1/2,1\right].
\end{array}
\right.
\end{equation*}
The functions $g$ and $f$ satisfy (g) and (f). Let $D_1$ and $D_2$ be defined by
\begin{equation*}
D_1(\phi):=\left\{
\begin{array}{ll}
\phi\left(1/2-\phi\right) \ &\mbox{ if } \ \phi \in \left[0,1/2\right],\\[2mm]
-\left(1-\phi\right)\left(\phi-1/2\right) \ &\mbox{ if }\ \phi\in\left(1/2,1\right],
\end{array}
\right.
\ D_2(\phi)=-D_1(\phi) \ \mbox{ for } \ \phi \in [0,1].
\end{equation*}
With these choices, $D_1$ satisfies {$\rm (D_{pn})$} with $\alpha=1/2$ and $D_2$ satisfies {$\rm (D_{np})$} with $\beta=1/2$. From \eqref{e:c*pr} and $h(0)=\dot{f}(0)=0$ we have $c^*_{p,r}\ge 0$. Direct computations show that the function $z=z(\phi)$ defined by
\begin{equation}
\label{z1}
z(\phi):=\phi^2\left(\phi-1/2\right), \ \phi \in \left[0, 1/2\right],
\end{equation}
solves $\dot{z}=h-D_1g/z$ in $(0,1/2)$, $z<0$ in $(0,1/2)$ and $z(0)=0$. From the proof of \cite[Theorem 2.2]{BCM1} we have $c^*_{p,r}\le 0$. Then, $c^*_{p,r}=0$ follows at once by \eqref{e:c*pr}. Also, the symmetry of the coefficients implies that $c^*_{n,l}=c^*_{p,r}=0$. Thus,
\[
c^*_{pn}=0.
\]
Observe that we actually need to involve $z$ in \eqref{z1} since none of the intervals given by \eqref{e:c*pr} and \eqref{e:c*nl}, even in the sharper form involving \eqref{e:MPintro}, reduce to the point $\{0\}$.
\par
Starting from the formal identity $\dot{z}(\phi)=D(\phi)\phi\rq{}$, \eqref{z1}, we can compute the profile $\phi$ of the (unique up to shifts) wavefront associated to $c^*_{pn}$ in the current case. We have:
\[
\phi(\xi)=\begin{cases}
\frac14 e^{-\xi} \ &\xi>\log(1/2),\\[2mm]
1-e^{\xi} \ &\xi\le \log(1/2),
\end{cases}
\]
where in the interval $\xi \le \log(1/2)$ we make use of the symmetry of the problem.

On the other hand, consider now $D_2$, $g$ and $f$ together. We have
\[
c^*_{p,l} \ge \max\left\{\sup_{(\frac1{2},1]}\delta\left(f,1/2\right),\ h(1/2) + 2\sqrt{\dot{D_2}(1/2)g(1/2)}\right\} >0,
\]
since $h(1/2)=-1/4$ and $2\sqrt{\dot{D_2}(1/2)g(1/2)}=1/\sqrt{2}$. It follows necessarily that
\[
c^*_{np}\ge c^*_{p,l}>0.
\]
Then, we proved that replacing $D$, which satisfies $\rm (D_{pn})$, with $-D$, which then satisfies $\rm (D_{np})$, one can get $c_{np}^*>c_{pn}^*$.
}
\end{example}

\begin{example}
\label{ex:n-p gap}
{
\rm
In Theorem \ref{thm:wf2} the case $c^*_{p,l}\neq c^*_{n,r}$ reveals the existence of unusual {\em non-regular fronts}, where $\phi=\beta$, while in the case $c^*_{p,l}=c^*_{n,r}$ this is not possible. We now show an example in either case.

First, assume that $D$ and $g$ satisfy {$\rm (D_{np})$}-(g) and are such that $Dg$ is convex in $(0,\beta)$ and concave in $(\beta,1)$. For instance, we can take $\beta=\frac12$, $D(\phi)=\phi-1/2$ and $g(\phi)=\phi(1-\phi)$. We plainly have
\[
\sup_{\phi \in [0,\beta)}\frac1{\beta-\phi}\int_{\phi}^{\beta}\frac{(Dg)(s)}{s-\beta}\,ds=\dot{D}(\beta)g(\beta) \ \mbox{ and } \ \sup_{\phi \in (\beta,1]}\frac1{\phi-\beta}\int_{\beta}^{\phi}\frac{(Dg)(s)}{s-\beta}\,ds=\dot{D}(\beta)g(\beta).
\]
Suppose also that $f=0$ in $[0,1]$. Under these assumptions, inequalities \eqref{e:c*pl} and  \eqref{e:c*nr} become indeed equalities:
\[
c^*_{n,r} = c^*_{p,l}= c_{np}^* = 2\sqrt{\dot{D}(\beta)g(\beta)}.
\]
Second, consider $D\in C^1\left[0,1\right]$ such that $D<0$ in $(0,1/2)$ and $D(\phi)=\left(\phi-1/2\right)^3$, for $\phi \in [1/2,1]$; assume that $g$ satisfies (g) with $g(\phi)=1-\phi$, for $\phi \in [1/2,1]$, and let $f$ be defined by
\begin{equation*}
f(\phi):=\left\{
\begin{array}{ll}
0\ &\mbox{ if } \ \phi \in [0,\frac1{2}),\\
\left(\phi-\frac12\right)^2\left(\phi-\frac32\right)\ &\mbox{ if }\ \phi\in[\frac1{2},1].
\end{array}
\right.
\end{equation*}
For $1/2\le \phi \le 1$, set
\[
z(\phi):=\left(\phi-\frac12\right)^2\left(\phi-1\right).
\]
Such a $z$ solves \eqref{e:zIntro}$_2$ with $\alpha=1/2$ and $c=0$ in $(1/2,1)$. Moreover, since it also holds $z(1/2)=0$, then $0\ge c^*_{p,l}$ (see e.g. the proof of \cite[Theorem 2.2]{BCM1}). The left-hand side of \eqref{e:c*pl} implies that $c^*_{p,l}\ge 0$, because $h(1/2)=\dot{f}(1/2)=0$. Thus, $c^*_{p,l}=0$. On the other hand, if we have $h=0$ constantly in $(0,1/2)$, then $c^*_{n,r}>0$ (as already observed in Introduction, for the case of non-negative $D$ we refer to \cite[Remark 6.4]{BCM1} and reference therein; the case when $D$ is negative is treated similarly). Thus, in this case we have
\[
c^*_{np}=c^*_{n,r}>c^*_{p,l}.
\]
Similarly, one can provide examples where $c^*_{np}=c^*_{p,l}>c^*_{n,r}$.
}
\end{example}

\section{The case when $D$ vanishes more than once}
\label{sec:two-sign}
\setcounter{equation}{0}

In this final section we briefly outline how to cope with the case when $D$ changes sign more than once, focusing on the simplest of these cases. More precisely we consider one of the following assumptions, see Figure \ref{f:DD}:
\begin{itemize}
\item[{$(\rm D_{pnp})$}] \, $D\in C^1[0,1]$, $D>0$ in $(0,\alpha)\cup (\beta,1)$ and $D<0$ in $(\alpha,\beta)$, with $\alpha<\beta$;

\item[{$(\rm D_{npn})$}] \, $D\in C^1[0,1]$, $D<0$ in $(0,\beta)\cup (\alpha,1)$ and $D>0$ in $(\beta, \alpha)$, with $\beta<\alpha$.
\end{itemize}

\begin{figure}[htb]
\begin{center}

\begin{tikzpicture}[>=stealth, scale=0.5]

\draw[->] (0,0) --  (6,0) node[below]{$\rho$} coordinate (x axis);
\draw[->] (0,0) -- (0,3) node[right]{$D$} coordinate (y axis);
\draw (0,0) -- (0,-1.5);
\draw[thick] (0,1) .. controls (0.7,2) and (1.3,2) .. (1.7,0) node[left=4, below=0]{\footnotesize{$\alpha$}};
\draw[thick] (1.7,0) .. controls (2.1,-2) and (3,-2) .. (3.5,0);
\draw[thick] (3.5,0) node[right=3, below=0]{\footnotesize{$\beta$}} .. controls (4,2) and (4.5,2) .. (5,1);
\draw[dotted] (5,1) -- (5,0) node[below]{\footnotesize{$1$}};
\draw(3,-3) node[above]{$(\rm D_{pnp})$};

\begin{scope}[xshift=10cm]
\draw[->] (0,0) --  (6,0) node[below]{$\rho$} coordinate (x axis);
\draw[->] (0,0) -- (0,3) node[right]{$D$} coordinate (y axis);
\draw (0,0) -- (0,-1.5);
\draw[thick] (0,-1) .. controls (0.7,-2) and (1.3,-2) .. (1.7,0) node[right=4, below=0]{\footnotesize{$\beta$}};
\draw[thick] (1.7,0) .. controls (2.1,2) and (3,2) .. (3.5,0);
\draw[thick] (3.5,0) node[left=3, below=0]{\footnotesize{$\alpha$}} .. controls (4,-2) and (4.5,-2) .. (5,-1);
\draw[dotted] (5,-1) -- (5,0) node[above]{\footnotesize{$1$}};
\draw(3,-3) node[above]{$(\rm D_{npn})$};
\end{scope}

\end{tikzpicture}

\end{center}
\caption{\label{f:DD}{Typical plots of the functions $D$.}}
\end{figure}
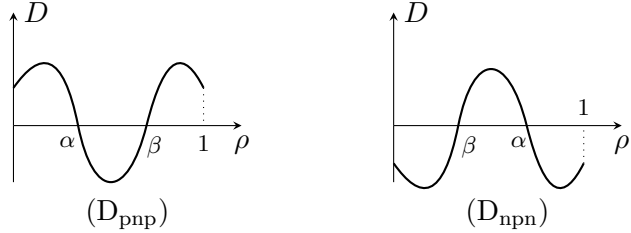

About the case $(\rm D_{pnp})$ we refer to \cite{Ferracuti-Marcelli-Papalini, Kuzmin-Ruggerini}, where $g$ is, respectively, monostable and bistable; \cite{Bao-Zhou2017} for the case {$(\rm D_{pnp})$} and monostable $g$, but with a specific quadratic diffusivity $D$. The case $(\rm D_{npn})$ has never been considered.

\begin{theorem}
\label{thm:pnp}
Assume {\rm (f)}, {$\rm (D_{pnp})$}, {\rm (g)} and $\eqref{Dg}_1$. Then, there exists $c^*_{pnp}\in \R$ such that Equation \eqref{e:E} admits a (unique up to space shifts) wavefront, with speed $c$ and profile $\phi$ satisfying \eqref{e:infty}, if and only if $c \ge c^*_{pnp}$.
\end{theorem}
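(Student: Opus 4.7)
The plan is to extend the two-piece pasting construction of Theorems~\ref{thm:wf} and~\ref{thm:wf2} to a \emph{three-piece} one, obtained by gluing two outer semi-wavefronts to a connecting TW living on the bounded middle region where $D<0$.

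On the right region $(0,\alpha)$, where $D>0$ vanishes at $\alpha$ with $\dot D(\alpha)\le 0$, Proposition~\ref{p:swf to zero} applies under~$\eqref{Dg}_1$ and gives a threshold $c^*_{p,r}$ satisfying~\eqref{e:c*pr} together with, for each $c\ge c^*_{p,r}$, a semi-wavefront from $\alpha$ to $0$ realizing $\bigl(D(\phi)\phi'\bigr)(\xi_\alpha^+)=0$. On the left region $(\beta,1)$, where $D>0$ vanishes at $\beta$ with $\dot D(\beta)\ge 0$, the sign structure is exactly the one addressed by Proposition~\ref{prop:cm-dpde}, which does not require any counterpart of~$\eqref{Dg}_1$ at $1$; it therefore yields a threshold $c^*_{p,l}$ and, for each $c\ge c^*_{p,l}$, a semi-wavefront from $1$ to $\beta$ realizing $\bigl(D(\phi)\phi'\bigr)(\xi_\beta^-)=0$.

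The middle piece is a TW profile $\chi$ defined on a bounded $\xi$-interval $(\xi_\beta,\xi_\alpha)$ that decreases from $\beta$ to $\alpha$ across the region where $D<0$. Via the substitution $z(\phi):=D(\phi)\chi'\bigl(\chi^{-1}(\phi)\bigr)$, this reduces to the two-point boundary value problem
\begin{equation*}
\dot z(\phi)=h(\phi)-c-\frac{D(\phi)g(\phi)}{z(\phi)},\qquad z>0 \text{ on } (\alpha,\beta),\qquad z(\alpha^+)=z(\beta^-)=0.
\end{equation*}
The vanishing of $z$ at both endpoints is imposed by the matching requirement: by the very computation behind~\eqref{e:integral011}--\eqref{e:integral021} (and Remark~\ref{rem:swf choice}), any other choice would break the distributional identity~\eqref{e:def-tw} at $\xi_\alpha$ or $\xi_\beta$. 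A shooting argument starting from $\alpha^+$, with local asymptotics determined by $s_+(\alpha,c)$ when $\dot D(\alpha)<0$ and by the degenerate balance $z(\phi)\sim(h(\alpha)-c)(\phi-\alpha)$ when $\dot D(\alpha)=0$, combined with super- and sub-solution comparisons in the spirit of~\cite{BCM1,Marcelli-Papalini} at both endpoints, yields a third threshold $c^*_{\rm mid}$ such that the middle TW exists if and only if $c\ge c^*_{\rm mid}$. Set
\begin{equation*}
c^*_{pnp}:=\max\bigl\{c^*_{p,r},\,c^*_{p,l},\,c^*_{\rm mid}\bigr\}.
\end{equation*}
Given $c\ge c^*_{pnp}$, the three profiles can be translated so that the outer ones meet the middle one at the common endpoints $\xi_\alpha,\xi_\beta$, and the resulting continuous function is a distributional solution by the same argument as in the proof of Theorem~\ref{thm:wf}, since $D(\phi)\phi'$ vanishes from either side at each junction. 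Uniqueness up to shifts descends from uniqueness of each of the three pieces. Conversely, the restriction of any wavefront to $(-\infty,\xi_\beta)$, $(\xi_\beta,\xi_\alpha)$ and $(\xi_\alpha,\infty)$ produces, respectively, a strict semi-wavefront from $1$ to $\beta$, a middle TW of the above type, and a strict semi-wavefront from $\alpha$ to $0$, forcing $c$ to be $\ge c^*_{p,l}$, $\ge c^*_{\rm mid}$ and $\ge c^*_{p,r}$.

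The main obstacle is the middle piece: unlike the semi-infinite problems treated in~\cite{BCM1}, the interval $(\alpha,\beta)$ carries compatibility conditions at \emph{both} endpoints, so the one-parameter shooting approach must be upgraded to a scalar equation in $c$. The key technical step is to combine the endpoint asymptotics at $\alpha$ and $\beta$ with a monotonicity-in-$c$ property of the shot trajectory, showing that the set of admissible $c$ for which the middle-interval problem is solvable is a closed upward half-line; this is what furnishes the threshold $c^*_{\rm mid}$ and consequently $c^*_{pnp}$.
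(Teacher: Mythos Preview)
Your proposal is correct and follows essentially the same three-piece strategy as the paper: apply Propositions~\ref{p:swf to zero} and~\ref{prop:cm-dpde} on the outer intervals, produce a connecting TW on $(\alpha,\beta)$ via the first-order reduction with $z(\alpha)=z(\beta)=0$, define $c^*_{pnp}$ as the maximum of the three thresholds, and glue as in Theorem~\ref{thm:wf}. The only notable difference is that, for the middle piece, the paper avoids a bespoke shooting argument by using the change of variables~\eqref{e:coefficients change} to convert $D<0$ on $(\alpha,\beta)$ into $\bar D>0$ on $(\bar\beta,\bar\alpha)$ and then invoking \cite[Proposition~4.2]{BCM1} directly; it also explicitly verifies (via the finiteness of $\bar D/z$ at the endpoints) that the middle $\xi$-interval is bounded, a point you assert but do not justify.
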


\begin{theorem}
\label{thm:npn}
Assume {\rm (f)}, {$\rm (D_{npn})$}, {\rm (g)} and $\eqref{Dg}_2$. Then, there exists $c^*_{npn}\in \R$ such that Equation \eqref{e:E} admits a (unique up to space shifts) wavefront, with speed $c$ and profile $\phi$ satisfying \eqref{e:infty}, if and only if $c \ge c^*_{npn}$.
\end{theorem}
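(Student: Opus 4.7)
The plan is to mimic the proof of Theorem \ref{thm:wf2}, with the profile obtained by pasting three pieces at the two zeros $\beta<\alpha$ of $D$: a semi-wavefront from $1$ to $\alpha$ through the outer region $(\alpha,1)$ where $D<0$, a traveling-wave solution on a bounded interval connecting $\alpha$ to $\beta$ through the inner region $(\beta,\alpha)$ where $D>0$, and a semi-wavefront from $\beta$ to $0$ through the outer region $(0,\beta)$ where $D<0$. At the two matching abscissae $\xi_\alpha$ and $\xi_\beta$, the pasting is feasible as a distributional solution precisely when $D(\phi)\phi'$ extends continuously across the two points (i.e.\ vanishes from both sides at each of them), on which the boundary-term computation in \eqref{e:integral011}--\eqref{e:integral021} (applied at each of $\xi_\alpha$ and $\xi_\beta$) hinges.

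The two outer pieces are supplied, after recognising the geometry, by results already proved. In $(\alpha,1)$ the configuration ($D<0$, vanishing at the left endpoint $\alpha$, with $g$ vanishing at the right endpoint $1$) matches Proposition \ref{prop:swf from one}; under $\eqref{Dg}_2$ it yields a threshold $c^*_{n,l}\in\R$ such that for every $c\ge c^*_{n,l}$ there is, unique up to shifts, a strict semi-wavefront from $1$ to $\alpha$ realising $(D(\phi)\phi')(\xi_\alpha^-)=0$. In $(0,\beta)$ the configuration ($D<0$, vanishing at the right endpoint $\beta$, with $g$ vanishing at the left endpoint $0$) matches Proposition \ref{prop:D1minus}; it yields a threshold $c^*_{n,r}\in\R$ such that for every $c\ge c^*_{n,r}$ there is a strict semi-wavefront from $\beta$ to $0$ realising $(D(\phi)\phi')(\xi_\beta^+)=0$.

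The main obstacle is the construction of a strictly decreasing traveling wave $\phi_{\alpha,\beta}$ defined on a bounded interval $(\xi_\alpha,\xi_\beta)$, with $\phi_{\alpha,\beta}(\xi_\alpha^+)=\alpha$, $\phi_{\alpha,\beta}(\xi_\beta^-)=\beta$ and $(D(\phi)\phi')(\xi_\alpha^+)=0=(D(\phi)\phi')(\xi_\beta^-)$, in the inner region where $D>0$ vanishes at both endpoints while $g$ does not. Using the singular reduction $z(\phi):=D(\phi)\phi'$, this amounts to the two-point boundary value problem
\begin{equation*}
\dot z(\phi)=h(\phi)-c-\frac{D(\phi)g(\phi)}{z(\phi)},\qquad z<0\ \text{ in }(\beta,\alpha),\qquad z(\alpha)=z(\beta)=0.
\end{equation*}
I would split $(\beta,\alpha)$ at a fixed interior point $\gamma$ and apply to each half a variant of the theory developed in \cite{BCM1} for $\eqref{e:zIntro}$: the right half $(\gamma,\alpha)$ is of $c^*_{p,r}$-type at the endpoint $\alpha$, the left half $(\beta,\gamma)$ is of $c^*_{p,l}$-type at the endpoint $\beta$, after a harmless rescaling of the $\phi$-variable. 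This produces, for $c$ large enough, a unique local solution $z^+$ on $(\gamma,\alpha)$ with $z^+(\alpha)=0$ and a family of local solutions $z^-$ on $(\beta,\gamma)$ with $z^-(\beta)=0$; a shooting/monotonicity argument in $c$ (exploiting that both $z^+(\gamma)$ and the admissible range of $z^-(\gamma)$ depend monotonically on $c$) then yields a critical value $c^*_{mid}\in\R$, independent of $\gamma$, such that $z^+$ and $z^-$ can be joined continuously at $\gamma$ if and only if $c\ge c^*_{mid}$.

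Setting $c^*_{npn}:=\max\{c^*_{n,l},\,c^*_{mid},\,c^*_{n,r}\}$, the sufficiency follows exactly as in Theorem \ref{thm:wf2}: for $c\ge c^*_{npn}$ the three pieces (horizontally translated so that they share their endpoints) glue into a continuous, non-increasing $\phi:\R\to[0,1]$ with $D(\phi)\phi'\in L^1_{\rm loc}(\R)$ continuous and vanishing at $\xi_\alpha$ and $\xi_\beta$, and the test-function computation then gives a wavefront in the sense of Definition \ref{d:tws}; uniqueness up to shifts is inherited from the uniqueness of each piece. Conversely, given a wavefront $(\phi,c)$ satisfying \eqref{e:infty}, the monostability of $g$ (as in \eqref{e:item ii}) forces unique abscissae $\xi_\alpha,\xi_\beta$ with $\phi(\xi_\alpha)=\alpha$, $\phi(\xi_\beta)=\beta$, the test-function argument of \eqref{equation c*np}--\eqref{e:newnew} applied at each of them forces $D(\phi)\phi'$ to vanish on both sides, and the three sub-thresholds must simultaneously be met, giving $c\ge c^*_{npn}$.
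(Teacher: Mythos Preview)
Your three-piece decomposition and pasting strategy is exactly the paper's: outer semi-wavefronts from Propositions~\ref{prop:swf from one} and~\ref{prop:D1minus} (thresholds $c^*_{n,l}$, $c^*_{n,r}$), an inner strict TW on $(\beta,\alpha)$, the boundary-term computation at both $\xi_\alpha$ and $\xi_\beta$ for sufficiency, and decomposition plus the sign argument of \eqref{e:newnew} for necessity. The only difference is in the treatment of the inner piece. You correctly reduce to the two-point problem $\dot z=h-c-Dg/z$, $z<0$ on $(\beta,\alpha)$, $z(\beta)=z(\alpha)=0$, but then propose to split at an interior $\gamma$ and shoot. The paper instead applies \cite[Proposition~4.2]{BCM1} directly (arguing as in the proof of Theorem~\ref{thm:pnp}, but without the change of variables \eqref{e:coefficients change} since $D>0$ on $(\beta,\alpha)$ already): that proposition is stated precisely for an interval where $D>0$ vanishes at both endpoints and $g>0$ on the closed interval, and it delivers the threshold $c^*_{\beta\alpha}$ in one step; the finiteness of $\xi_\alpha,\xi_\beta$ then follows from \cite[Lemma~9.1]{BCM1} and \cite[proof of Theorem~2.5]{CM-DPDE}. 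Your shooting sketch is plausible but has loose ends: the labels ``$c^*_{p,r}$-type'' and ``$c^*_{p,l}$-type'' refer to situations where $g$ vanishes at one endpoint, and it is that vanishing which produces the existence threshold in the former case and guarantees existence for all $c$ in the latter; on $(\gamma,\alpha)$ and $(\beta,\gamma)$ neither endpoint has $g=0$, so neither half is literally of those types, and the asserted asymmetry ``unique $z^+$'' versus ``family of $z^-$'' is not supported by those analogies. The direct citation is both shorter and avoids these issues.
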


In both theorems one can easily deduce more informations on the profiles, as in Theorems \ref{thm:wf} and \ref{thm:wf2}. We leave these details to the reader.

\begin{proofof}{Theorem \ref{thm:pnp}}
We divide our problem in three sub-problems corresponding to the three connected components of $\left\{D\neq 0\right\}$. To the intervals $(0,\alpha)$ and $(\beta,1)$ we apply Propositions \ref{p:swf to zero} and \ref{prop:cm-dpde}, respectively. Indeed, the results of Sections \ref{sec:dpn} and \ref{sec:dnp} hold under lighter assumptions, involving only that part of the conditions corresponding to the interval under consideration.

Let ${c}_{p,r}^*$ and ${c}_{p,l}^*$ be the thresholds appearing in Propositions \ref{p:swf to zero} and \ref{prop:cm-dpde}, respectively. Hence, associated to the same speed $c$, both a semi-wavefront connecting $\alpha$ and $0$, with profile $\phi_{\alpha,0}$ satisfying \eqref{e:xialpha phi} and a semi-wavefront connecting $\beta$ to $1$, whose profile is $\phi_{1,\beta}$ satisfying $\eqref{e:threshold cm-dpde condition}_1$, are given, if and only if $c\ge \max \{{c}^*_{p,r}, {c}_{p,l}^*\}$.
\par
We claim that there exists $c^*_{\alpha\beta}\in\R$ such that the following holds: if and only if $c\ge c^*_{\alpha\beta}$, Equation \eqref{e:E} admits a strict TW, connecting $\beta$ to $\alpha$, whose non-increasing profile $\phi_{\beta,\alpha}\in (\alpha, \beta)$ is defined in some interval $(\xi_\beta, \xi_\alpha)$, where $-\infty < \xi_\beta < \xi_\alpha < +\infty$ are such that $\phi_{\beta,\alpha}(\xi_\beta^+)=\beta \ \mbox{ and } \ \phi_{\beta,\alpha}(\xi_\alpha^-)=\alpha$ and also
\begin{equation*}
\label{e:e1}
\left(D\left(\phi_{\beta,\alpha}\right)\phi_{\beta,\alpha}\rq{}\right)\left(\xi_\beta^+\right) = 0 = \left(D\left(\phi_{\beta,\alpha}\right)\phi_{\beta,\alpha}\rq{}\right)\left(\xi_\alpha^-\right).
\end{equation*}
Then by gluing together $\phi_{1,\beta}$, $\phi_{\beta, \alpha}$ and $\phi_{\alpha,0}$ (modulo shifts) we obtain the front of \eqref{e:E} satisfying \eqref{e:infty}, in virtue of \eqref{e:e1}, as in the proof of Theorem \ref{thm:wf}. Clearly, we define
\[
c^*_{pnp}=\max\left\{ c^*_{p,r}, c_{p,l}^*, c^*_{\alpha \beta}\right\}.
\]
This proves the {\em if} part of Theorem \ref{thm:pnp}.
Viceversa, assume that a wavefront $\phi$ of \eqref{e:E}, associated to some speed $c$, is given. As in the proof of the {\em only if} parts of Theorems \ref{thm:wf} and \ref{thm:wf2}, $\phi$ is decomposed into a semi-wavefront which connects $1$ to $\beta$ whose profile satisfies \eqref{e:threshold cm-dpde condition}, a strict TW connecting $\beta$ to $\alpha$ and a strict semi-wavefront connecting $\alpha$ to $0$. By Propositions \ref{p:swf to zero} and \ref{prop:swf from one}, the {\em only if} part of our claim implies $c\ge c^*_{pnp}$. Thus, Theorem \ref{thm:pnp} follows from the claim. We refer to Figure \ref{f:3phi}.

\begin{figure}[htb]
\begin{center}

\begin{tikzpicture}[>=stealth, scale=0.7]

\draw[->] (0,0) --  (10,0) node[below]{$\xi$} coordinate (x axis);
\draw[->] (4,0) -- (4,4) node[right]{$\phi$} coordinate (y axis);
\draw (0,3) -- (10,3);
\draw(4,3.3) node[left]{\footnotesize{$1$}};
\draw[dotted] (4,2.4) node[right=5,above=-2]{\footnotesize{$\beta$}} -- (10,2.4);
\draw[dotted] (0,2.4) -- (4,2.4);
\draw[dotted] (3.5,0) node[below]{$\xi_\beta$} -- (3.5,2.4);
\draw[dotted] (4,0.8) node[right=5,below=-2]{\footnotesize{$\alpha$}} -- (10,0.8);
\draw[dotted] (0,0.8) -- (4,0.8);
\draw[dotted] (5.9,0) node[below]{$\xi_\alpha$} -- (5.9,0.8);
\draw[thick] (0,2.8) .. controls (7,2.8) and (3,0.2) .. node[below,very near start]{$\phi_{1,\beta}$} node[right,midway]{$\phi_{\alpha,\beta}$} node[above,very near end]{$\phi_{\alpha,0}$} (10,0.1); 
\end{tikzpicture}

\end{center}
\caption{\label{f:3phi}{The pasting of the profiles.}}
\end{figure}
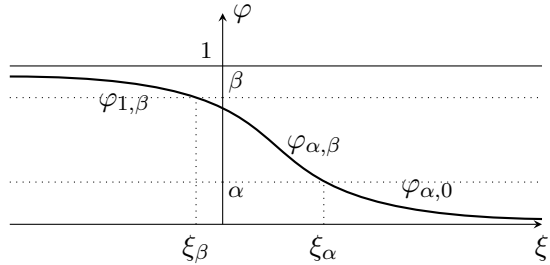

We prove the claim. Let $\bar D$, $\bar g$, $\bar f$ and $\bar \alpha$ be as in \eqref{e:coefficients change} and let $\bar h$ be the derivative of $\bar f$, which gives  $\bar h(\phi)= h(1-\phi)$. Set $\bar \beta=1-\beta$. Let $c$ be a real value and consider the following equation in the unknown $\psi=\psi(\xi) \in [\bar \beta, \bar \alpha]$:
\begin{equation}
\label{e:psi} \left(\bar D(\psi) \psi\rq{}\right)\rq{} +\left(c- \bar h(\psi)\right)\psi\rq{} +\bar g(\psi)=0.\end{equation}
We are interested in a non-increasing $\psi$ satisfying \eqref{e:psi} in some interval $(\xi_{\bar\alpha}, \xi_{\bar\beta})$, with $-\infty < \xi_{\bar\alpha} < \xi_{\bar \beta} < +\infty$, such that $\psi({\xi_{\bar \alpha}}^+)=\bar \alpha$, $\psi({\xi_{\bar\beta}}^-)=\bar\beta$ and 
\[ \left(\bar D(\psi)\psi\rq{}\right)({\xi_{\bar\alpha}}^+)=0=\left(\bar D(\psi)\psi\rq{}\right)({\xi_{\bar\beta}}^-).\]

By using a standard argument (see e.g. \cite{MMconv}) based on the monotonicity of $\psi$ and the definition of $z(\psi):=\bar D(\psi)\psi\rq{}$, the existence of a $\psi$ as above, but with not necessarily finite $\xi_{\bar\alpha}$ and $\xi_{\bar\beta}$, is equivalent to the solvability of the problem
\[ \begin{cases} \dot z(\psi)= \bar h(\psi)-c- \frac{\bar D (\psi)\bar g(\psi)}{z(\psi)}, \ &\psi \in (\bar\beta, \bar\alpha),\\
z(\psi)<0 \ &\psi \in (\bar\beta, \bar\alpha),\\
z(\bar\beta)=z(\bar \alpha)=0.\end{cases}\]
Since $\bar D >0$ in $(\bar\beta, \bar\alpha)$, $\bar D(\bar\beta)=\bar D (\bar\alpha)=0$, $\bar g >0 $ in $[\bar \beta, \bar \alpha]$, by applying \cite[Proposition 4.2]{BCM1}, such a problem is solvable if and only if $c \ge c^*$, for some $c^* \in \R$. Also, since (by arguing as in \cite[Lemma 9.1]{BCM1} and \cite[proof of Theorem 2.5]{CM-DPDE} where analogous assumptions were considered) we have
\[ \lim_{\psi \to \bar\alpha^-}\frac{\bar D (\psi)}{z(\psi)} \in \R \ \mbox{ and } \ \lim_{\psi \to \bar \beta ^+}\frac{\bar D (\psi)}{z(\psi)} \in \R,\]
then $\xi_{\bar\alpha}$ and $\xi_{\bar\beta}$ are finite because $\psi$ is bounded and $\psi\rq{}(\xi) <0$ for $\xi$ which tends to either ${\xi_{\bar\alpha}}^+$ or ${\xi_{\bar\beta}}^-$. Finally, setting $c_{\alpha\beta}^* :=c^*$ concludes the proof of the claim since the desired $\phi_{\beta,\alpha}$ exists if and only if there exists $\psi$, by $\phi_{\beta, \alpha}(\xi):=1-\psi(-\xi)$.

\end{proofof}

\begin{remark}
\label{rem:final1}
{
\rm
The analog of Theorem \ref{thm:pnp} when $f=0$ was given in the first part of \cite[Theorem 4.2]{Ferracuti-Marcelli-Papalini} with slightly different notation. We point out that in \cite{Ferracuti-Marcelli-Papalini} wavefronts are necessarily classical at $1$ (called classical or sharp of type (I), there). Instead, when $D(1)=0$ and $c<h(1)$  the wavefronts of Theorem \ref{thm:pnp} can be sharp at $1$, too, by applying Part $(iii)$ of Proposition \ref{prop:cm-dpde} to $\phi_{1,\beta}$ of the proof of Theorem \ref{thm:pnp}. Note, $c<h(1)$ cannot happen if $f=0$. 

We also observe that, as in \cite[Theorem 4.2]{Ferracuti-Marcelli-Papalini}, wavefronts are sharp at $0$ only if $D(0)=0$ and $c=c^*_{pnp}$.  Moreover, if $D(0)=0$ and  $c=c^*_{pnp}$, generalizing \cite[(4.1)]{Ferracuti-Marcelli-Papalini}, it is possible to provide a sufficient condition on $f$ and $Dg$ in order to make wavefronts of  Theorem \ref{thm:pnp} always classical at $0$.
}
\end{remark}

\begin{proofof}{Theorem \ref{thm:npn}}
We proceed in the spirit of the proof of Theorem \ref{thm:pnp}. We consider separately the intervals where $D$ has constant sign. From Proposition \ref{prop:swf from one} we deduce that a semi-wavefront connecting $1$ to $\alpha$, with speed $c$ and profile satisfying \eqref{e:ell2} with $s=0$, exists if and only if $c \ge c^*_{n,l}$. Proposition \ref{prop:D1minus} implies that there exists a semi-wavefront connecting $\beta$ to $0$ with speed $c$ and profile satisfying $\eqref{e:threshold cm-dpde condition2}_1$ if and only if $c\ge c^*_{n,r}$. About the interval $(\beta,\alpha)$, we argue as in the proof of Theorem \ref{thm:pnp}, but considering Equation \eqref{e:ODE}, directly, rather than Equation \eqref{e:psi}. We have that a strict TW connecting $\alpha$ to $\beta$ with speed $c$ and profile $\phi_{\alpha,\beta}:(\xi_\alpha, \xi_\beta) \to (\beta,\alpha)$ satisfying $\phi_{\alpha,\beta}(\xi_\alpha^+)=\alpha$, $\phi_{\alpha,\beta}(\xi_\beta^-)=\beta$ and such that $\left(D(\phi_{\alpha,\beta})\phi_{\alpha,\beta}\right)(\xi)$ tends to $0$ if either $\xi\to \xi_\alpha^+$ and $\xi\to \xi_\beta^-$, exists if and only if $c\ge c^*_{\beta\alpha}$. To conclude the proof, we set
$c^*_{npn}:=\max\{c_{n,l}^*,c^*_{n,r},c^*_{\beta\alpha}\}$.
\end{proofof}

\begin{remark}
\label{rem:final2}
{
\rm
Similarly to Remark \ref{rem:final1}, we can deduce information on the regularity of wavefronts given in Theorem \ref{thm:npn}.
}
\end{remark}

\section*{Acknowledgments}
The authors are members of the {\em Gruppo Nazionale per l'Analisi Matematica, la Probabilit\`{a} e le loro Applicazioni} (GNAMPA) of the {\em Istituto Nazionale di Alta Matematica} (INdAM) and acknowledge financial support from this institution.

%

{\small
\bibliography{refe_BCM2}

\begin{thebibliography}{10}

\bibitem{Aronson-Weinberger}
D.~G. Aronson, H.~F. Weinberger.
\newblock Multidimensional nonlinear diffusion arising in population genetics.
\newblock {\em Adv. in Math.}, 30(1):33--76, 1978.

\bibitem{Bao-Zhou2014}
L.~Bao, Z.~Zhou.
\newblock Traveling wave in backward and forward parabolic equations from
  population dynamics.
\newblock {\em Discrete Contin. Dyn. Syst. Ser. B}, 19(6):1507--1522, 2014.

\bibitem{Bao-Zhou2017}
L.~Bao, Z.~Zhou.
\newblock Traveling wave solutions for a one dimensional model of cell-to-cell
  adhesion and diffusion with monostable reaction term.
\newblock {\em Discrete Contin. Dyn. Syst. Ser. S}, 10(3):395--412, 2017.

\bibitem{Bellomo-Delitala-Coscia}
N.~Bellomo, M.~Delitala, V.~Coscia.
\newblock On the mathematical theory of vehicular traffic flow. {I}. {F}luid
  dynamic and kinetic modelling.
\newblock {\em Math. Models Methods Appl. Sci.}, 12(12):1801--1843, 2002.

\bibitem{Bellomo-Dogbe}
N.~Bellomo, C.~Dogbe.
\newblock On the modeling of traffic and crowds: a survey of models,
  speculations, and perspectives.
\newblock {\em SIAM Rev.}, 53(3):409--463, 2011.

\bibitem{BCM1}
D.~Berti, A.~Corli, L.~Malaguti.
\newblock Uniqueness and nonuniqueness of fronts for degenerate
  diffusion-convection reaction equations.
\newblock {\em Electron. J. Qual. Theory Differ. Equ.}, Paper No. 66, 34, 2020.

\bibitem{BTTV}
L.~Bruno, A.~Tosin, P.~Tricerri, F.~Venuti.
\newblock Non-local first-order modelling of crowd dynamics: a multidimensional
  framework with applications.
\newblock {\em Appl. Math. Model.}, 35(1):426--445, 2011.

\bibitem{CdRM}
A.~Corli, L.~di~Ruvo, L.~Malaguti.
\newblock Sharp profiles in models of collective movements.
\newblock {\em NoDEA Nonlinear Differential Equations Appl.}, 24(4):Paper No.
  40, 31 pp., 2017.

\bibitem{CdRMR}
A.~Corli, L.~di~Ruvo, L.~Malaguti, M.~D. Rosini.
\newblock Traveling waves for degenerate diffusive equations on networks.
\newblock {\em Netw. Heterog. Media}, 12(3):339--370, 2017.

\bibitem{CM-DPDE}
A.~Corli, L.~Malaguti.
\newblock Semi-wavefront solutions in models of collective movements with
  density-dependent diffusivity.
\newblock {\em Dyn. Partial Differ. Equ.}, 13(4):297--331, 2016.

\bibitem{CM-ZAMP}
A.~Corli, L.~Malaguti.
\newblock Viscous profiles in models of collective movement with negative
  diffusivity.
\newblock {\em Z. Angew. Math. Phys.}, 70(2):Art. 47, 22 pp., 2019.

\bibitem{CM-Indam}
A.~Corli, L.~Malaguti.
\newblock Wavefronts in traffic flows and crowds dynamics.
\newblock {\em Lecture Notes INDAM, to appear}, 2020.

\bibitem{DJLW}
D.~A. Di{C}arlo, R.~Juanes, L.~Tara, T.~P. Witelski.
\newblock Nonmonotonic traveling wave solutions of infiltration into porous
  media.
\newblock {\em Water Resources Res.}, 44:1--12, 2008.

\bibitem{Ferracuti-Marcelli-Papalini}
L.~Ferracuti, C.~Marcelli, F.~Papalini.
\newblock Travelling waves in some reaction-diffusion-aggregation models.
\newblock {\em Adv. Dyn. Syst. Appl.}, 4(1):19--33, 2009.

\bibitem{Garavello-Han-Piccoli_book}
M.~Garavello, K.~Han, B.~Piccoli.
\newblock {\em Models for vehicular traffic on networks}.
\newblock American Institute of Mathematical Sciences (AIMS), Springfield, MO,
  2016.

\bibitem{Garavello-Piccoli_book}
M.~Garavello, B.~Piccoli.
\newblock {\em Traffic flow on networks}.
\newblock American Institute of Mathematical Sciences (AIMS), Springfield, MO,
  2006.

\bibitem{GK}
B.~H. Gilding, R.~Kersner.
\newblock {\em Travelling waves in nonlinear diffusion-convection reaction}.
\newblock Birkh\"auser Verlag, Basel, 2004.

\bibitem{HPO}
D.~Horstmann, K.~J. Painter, H.~G. Othmer.
\newblock Aggregation under local reinforcement: from lattice to continuum.
\newblock {\em European J. Appl. Math.}, 15(5):546--576, 2004.

\bibitem{Kerner-Osipov}
B.~S. Kerner, V.~V. Osipov.
\newblock {\em Autosolitons}.
\newblock Kluwer Academic Publishers Group, Dordrecht, 1994.

\bibitem{Kuzmin-Ruggerini}
M.~Kuzmin, S.~Ruggerini.
\newblock Front propagation in diffusion-aggregation models with bi-stable
  reaction.
\newblock {\em Discrete Contin. Dyn. Syst. Ser. B}, 16(3):819--833, 2011.

\bibitem{Maini-Malaguti-Marcelli-Matucci2006}
P.~K. Maini, L.~Malaguti, C.~Marcelli, S.~Matucci.
\newblock Diffusion-aggregation processes with mono-stable reaction terms.
\newblock {\em Discrete Contin. Dyn. Syst. Ser. B}, 6(5):1175--1189, 2006.

\bibitem{Maini-Malaguti-Marcelli-Matucci2007}
P.~K. Maini, L.~Malaguti, C.~Marcelli, S.~Matucci.
\newblock Aggregative movement and front propagation for bi-stable population
  models.
\newblock {\em Math. Models Methods Appl. Sci.}, 17(9):1351--1368, 2007.

\bibitem{Malaguti-Marcelli_2003}
L.~Malaguti, C.~Marcelli.
\newblock Sharp profiles in degenerate and doubly degenerate {F}isher-{KPP}
  equations.
\newblock {\em J. Differential Equations}, 195(2):471--496, 2003.

\bibitem{MMconv}
L.~Malaguti, C.~Marcelli.
\newblock Finite speed of propagation in monostable degenerate
  reaction-diffusion-convection equations.
\newblock {\em Adv. Nonlinear Stud.}, 5(2):223--252, 2005.

\bibitem{Marcelli-Papalini}
C.~Marcelli, F.~Papalini.
\newblock A new estimate of the minimal wave speed for travelling fronts in
  reaction-diffusion-convection equations.
\newblock {\em Electron. J. Qual. Theory Differ. Equ.}, 10:1--13, 2018.

\bibitem{Nelson_2000}
P.~Nelson.
\newblock Synchronized traffic flow from a modified {L}ighthill-{W}hitham
  model.
\newblock {\em Phys. Review E}, 61:R6052--R6055, 2000.

\bibitem{Padron}
V.~Padr\'on.
\newblock Effect of aggregation on population recovery modeled by a
  forward-backward pseudoparabolic equation.
\newblock {\em Trans. Amer. Math. Soc.}, 356(7):2739--2756, 2004.

\bibitem{Rosinibook}
M.~D. Rosini.
\newblock {\em Macroscopic models for vehicular flows and crowd dynamics:
  theory and applications}.
\newblock Springer, Heidelberg, 2013.

\end{thebibliography}
\bibliographystyle{abbrv2}
}

\end{document}